\providecommand{\U}[1]{\protect\rule{.1in}{.1in}}
\newtheorem{theorem}{Theorem}[section]
\newtheorem{lemma}{Lemma}[section]
\theoremstyle{definition}
\newtheorem{definition}{Definition}[section]
\theoremstyle{remark}
\newtheorem{example}{Example}[section]
\numberwithin{equation}{section}
\begin{document}
    \begin{frontmatter}

		\title{Approximate controllability of semilinear impulsive evolution equations}


		
		\author[1]{Javad A. Asadzade}
		\ead{javad.asadzade@emu.edu.tr}
            \author[1,2]{Nazim I. Mahmudov}
		\ead{nazim.mahmudov@emu.edu.tr}

  \address[1]{Department of Mathematics, Eastern Mediterranean University, Mersin 10, 99628, T.R. North Cyprus, Turkey}
	\address[2]{Research Center of Econophysics, Azerbaijan State University of Economics (UNEC), Istiqlaliyyat Str. 6, Baku, 1001,  Azerbaijan}
	
		

		\begin{abstract}
Several dynamical systems in fields such as engineering, chemistry, biology, and physics  show impulsive behavior by reason of unexpected changes at specific times.
These behaviors are described by differential systems under impulse effects. The current paper examines approximate controllability for semi-linear impulsive differential
  and neutral differential equations in Hilbert spaces.
By applying a fixed-point method and semigroup theory, a new sufficient condition is provided for the (\(\mathcal{A}\)-controllability) approximate controllability of neutral and impulsive differential equations (IDEs). To demonstrate the value of the suggested consequences, three examples are presented,
offering improvements over some recent findings.

		\end{abstract}

		\begin{keyword}
Approximate controllability, existence and uniqueness, impulsive systems.
		\end{keyword}

	\end{frontmatter}

	\section{Introduction}
	
	\label{Sec:intro}

Impulsive behavior is a feature of many evolutionary processes, in which systems experience sudden, major disruptions at specific times. They effectively account for the impact of these sudden events on system states and provide an accurate representation of the dynamic behavior and properties of many processes. Numerous fields, including as computer science, genetics, population studies, artificial intelligence, neural networks, robotics, telecommunications, and biological systems, commonly use these kinds of impulsive dynamic systems. For a more thorough examination of impulsive systems, see monograph \cite{laks}.

In dynamic control systems, controllability is a basic property. When a system has an appropriate set of control functions, it may transition between any begining state and any target end state
in the space of its state in a finite period.

Recently, interest has surged in the \(\mathcal{A}\)-controllability of IDEs,
in which the state is influenced by impulses at a finite time intervals.
This topic has gained traction, with an expanding body of literature (see
\cite{laks}, \cite{pandit}, \cite{guan1}-\cite{muni}). Notable contributions
have been made by researchers such as George
et al.  \cite{geo},Benzaid and Sznaier \cite{ben},Muni and George \cite{muni}, Xie and Wang
\cite{xie2}, Guan et al.  \cite{guan1}, \cite{guan2},  Han et al. \cite{han},  Zhao and Sun \cite{zhao1}, \cite{zhao},
 among others.

In this paper, we analyze a criterion for \(\mathcal{A}\)-controllability of systems modeled by linear IDEs in abstract spaces.  It is assumed that the generator of a $C_0$-semigroup is the operator \( A \) influencing the state. A system is said to be \(\mathcal{A}\)-controllable if it can be moved from any beginning condition to a state that is close to a desired one.

We establish a criterion for the $\mathcal{A}$-controllability of linear IDEs by framing the problem as the limit of optimal control problems and redefining it in terms of the convergence of resolvent operators. The $\mathcal{A}$-controllability of a variety of  semilinear IDEs has been extensively studied using the so called resolvent condition, that is easy to apply. The analysis becomes more complex when impulses are introduced, even if this condition corresponds to $\mathcal{A}$-controllability in situations without impulses.

The necessary idea in the design and study of control systems is controllability.  Many different approaches have been used to explore the \(\mathcal{A}\)-controllability deterministic/stochastic differential systems in infinite-dimensional environments. There are two main types of controllability that are extensively applicable in the context of infinite dimensions: exact controllability and $\mathcal{A}$-controllability. A system can be directed into an arbitrarily small region of the ultimate state if it is $\mathcal{A}$-controllable, whereas it can reach a specific final state within a particular timeframe if it is exactly controllable. More prevalent and more appropriate for real-world uses are systems with $\mathcal{A}$-controllability. Studying approximation controllability in infinite-dimensional control systems is therefore essential.

During the past few years, necessary advancements have been made regarding the $\mathcal{A}$-controllability of deterministic and stochastic impulsive systems (see, for instance, \cite{14},\cite{23},  \cite{54} , \cite{54a}, \cite{54b}, etc.). These studies have established sufficient conditions for the $\mathcal{A}$-controllability of semilinear systems by employing the resolvent operator condition introduced in \cite{basmah}, \cite{mah}, particularly when the corresponding linear system is $\mathcal{A}$-controllable.

The resolvent condition is straightforward to apply and has been widely used in research on the $\mathcal{A}$-controllability of various semilinear IDEs. Without the presence of impulses, this condition aligns with the $\mathcal{A}$-controllability of the linear component of the corresponding semilinear evolution control system (see \cite{basmah}, \cite{mah}). However, the inclusion of impulses adds significant complexity to the analysis.  To date, there has been no study on the $\mathcal{A}$-controllability of controlled semilinear systems with control incorporated into the impulses. This paper marks the first effort to tackle this issue for semilinear deterministic systems. In this work, we investigate the $\mathcal{A}$-controllability of the following semilinear IDEs:
\begin{equation}\label{eq1}
	\begin{cases}
		\xi^{\prime}(t)=A\xi(t)+\Omega u(t)+\kappa(t,\xi(t)), & t\in\mathscr{I}=[0,b]\setminus\{t_{1},\dots,t_{p}\},\\
		\Delta \xi(t_{k+1})=B_{k+1}\xi(t_{k+1})+D_{k+1}v_{k+1},& k=0,\dots,p-1,\\
		\xi(0)=\xi_{0}.
	\end{cases}
\end{equation}

Here \(\xi(\cdot) \in H \) is in a Hilbert space .
The control \( u(\cdot) \) is an element of \(L^2([0, b], U) \), where ( \(H,\|\xi\| = \sqrt{\langle \xi, \xi\rangle} \) ) and \(U \) are Hilbert spaces, and \(v_k \in U \) for \(k = 1, \ldots, p \).

\bigskip

In this setting, \(A \) acts as a generator of a $C_0$-semigroup \(S(t) \) of continuous linear operators in \(H \). The  operators are: \(\Omega \in L(U, H) \), \(B_k \in L(H, H) \), and \(D_k \in L(U, H) \).

\bigskip

At each point of discontinuity \( t_k \) (for \( k = 1, \ldots, p \) with \( 0 = t_0 < t_1 < t_2 < \cdots < t_n < t_{p+1} = b \)), the state variable undergoes a jump, defined by \( \Delta \xi(t_k) = \xi(t_k^+) - \xi(t_k^-) \). Here, \( \xi(t_k^\pm) = \lim_{h \to 0^\pm} \xi(t_k + h) \), assuming that \( \xi(t_k^-) = \xi(t_k) \).

\bigskip

For operator compositions, \( \prod_{j=1}^{k} A_j \) represents the sequence \( A_1, A_2, \ldots, A_k \), while for \( j = k+1 \) to \( k \), \( \prod_{j=k+1}^{k} A_j = 1 \). Likewise, \( \prod_{j=k}^{1} A_j \) refers to the sequence \( A_k, A_{k-1}, \ldots, A_1 \), and \( \prod_{j=k}^{k+1} A_j = 1 \).

\bigskip

It is important to highlight that when \( H \) is an infinite-dimensional Hilbert space, under some natural conditions, the following linear IDEs:

\begin{align}\label{eq18}
\begin{cases}
    \xi^{\prime}(t) = A\xi(t) + \Omega u(t), & t \in \mathscr{I} = [0, b] \setminus \{t_{1}, \dots, t_{p}\}, \\
    \Delta \xi(t_{k+1}) = B_{k+1}\xi(t_{k+1}) + D_{k+1}v_{k+1}, & k = 0, \dots, p-1, \\
    \xi(0) = \xi_{0}.
\end{cases}
\end{align}

is $\mathcal{A}$-controllable, as noted in \cite{1}. This aspect is crucial for our article.
\bigskip

$\mathcal{A}$-controllability of neutral impulsive systems addresses the ability to drive the state of systems with both neutral (i.e., dependent on both the state and its derivatives) and impulsive (i.e., experiencing sudden changes at certain moments) characteristics to within an arbitrarily close distance of a desired target state.

In neutral impulsive systems, the dynamics are influenced not only by the present state but also by its delayed effects or derivatives, which adds complexity to the control problem. The occurrence of impulses introduces further complexity as it creates discontinuities in the system's trajectory. To study $\mathcal{A}$-controllability in these systems, mathematical techniques such as fixed-point theory, semigroup theory, and resolvent operator methods are typically applied. These approaches allow for deriving sufficient conditions that ensure that, despite delays and impulses, the system's state can be steered as close as desired to any given target.

Furthermore, we examine the $\mathcal{A}$-controllability of neutral IDEs, expressed in the following form:

\begin{equation}\label{eq188}
	\begin{cases}
		\frac{d}{dt}[\xi(t)+\sigma(t,\xi(t))]=A[\xi(t)+\sigma(t,\xi(t))]+\Omega u(t)+\kappa(t,\xi(t)), & t\in\mathscr{I}=[0,b]\setminus\{t_{1},\dots,t_{p}\},\\
		\Delta \xi(t_{k+1})=B_{k+1}\xi(t_{k+1})+D_{k+1}v_{k+1},& k=0,\dots,p-1,\\
		\xi(t)=\varphi(t),& t\in [-\tau, 0].
	\end{cases}
\end{equation}

In summary, our manuscript is structured as follows:
\bigskip

In Section 2, essential definitions, hypothesis, and theorems that underpin our main results are provided. Following this, Section 3 focuses on establishing the existence of solutions and the $\mathcal{A}$-controllability of semi-limear IDEs. Then, Section 4 extends these findings to neutral impulsive systems, presenting analogous results. Finally, in the concluding sections, we explore applications related to impulsive wave and heat equations, and we illustrate the solution of an impulsive semilinear equation in finite-dimensional spaces.
 \section{Theoretical background}
Let the value of \eqref{eq1} at the terminal time \( b \) be represented by \( \xi_b(\xi_0; u) \), which corresponds to the control \( u \) and the initial state \( \xi_0 \). We define the set

\[
\mathcal{R}(b, \xi_0) = \{ \xi_b(\xi_0; u)(0) : u(\cdot) \in L^2([0,b], U) \},
\]
which is referred to as the reachable set of \eqref{eq1} at final time \( b \). The closure of this set in the space \( H \) is indicated  by \(\overline{\mathcal{R}(b, \xi_0)} \).

\begin{definition}
We say that \eqref{eq1} is $\mathcal{A}$-controllable on \( [0,b] \) if \( \overline{\mathcal{R}(b, \xi_0)} = H \).
\end{definition}

For the sake of simplicity, let us define the operators as follows:
\begin{align*}
    \Gamma^{b}_{t_{p}}&=\int_{t_{p}}^{b}S(b-s)\Omega\Omega^{*}S^{*}(b-s)ds,\quad \tilde{\Gamma}^{b}_{t_{p}}=S(b-t_{p})D_{p}D^{*}_{p}S^{*}(b-t_{p}),\end{align*}
    \begin{align*}
            \Theta^{t_{p}}_{0}&=S(b-t_{p})\sum_{i=1}^{p}\prod_{j=p}^{i+1}(\mathcal{I}+B_{j})S(t_{j}-t_{j-1})
	(\mathcal{I}+B_{i})\int_{t_{i-1}}^{t_{i}}S(t_{i}-s)\Omega\Omega^{*}S^{*}(t_{k}-s)ds\\
 &\times (\mathcal{I}+B^{*}_{i})\prod_{k=i+1}^{p}S^{*}(t_{k}-t_{k-1})(\mathcal{I}+B^{*}_{k})S^{*}(b-t_{p}),
\end{align*}
\begin{align*}
    \tilde{\Theta}^{t_{p}}_{0}&=S(b-t_{p})\sum_{i=2}^{p}\prod_{j=p}^{i}(\mathcal{I}+B_{j})S(t_{j}-t_{j-1})D_{i-1}D^{*}_{i-1}\prod_{k=i}^{p}S^{*}(t_{k}-t_{k-1})(\mathcal{I}+B^{*}_{k})S^{*}(b-t_{p}).
\end{align*}
$(A_{0})$ $\alpha\Big(\alpha I+\Gamma^{b}_{t_{p}}+ \tilde{\Gamma}^{b}_{t_{p}}+\Theta^{t_{p}}_{0}+\tilde{\Theta}^{t_{p}}_{0}\Big)^{-1}\to 0$ as $\alpha\to 0^{+}$ in the strong operator topology.
\bigskip

Assumption $(A_{0})$ is equivalent to  the $\mathcal{A}$-controllablility of  \eqref{eq18}  on $[0,b]$, see \cite{1} (Theorem 13).
\bigskip

To simplify our discussion, we will now adopt the following notation:
\begin{align*}
    &K=\Vert B\Vert, \quad M=\max\Big\{\Vert S(t)\Vert : 0\leq t\leq b\Big\},\quad C=\max \Big\{ \Vert B_{i}\Vert,\quad for \quad i=1,\dots,p\Big\}, \\
    &\Vert \lambda_{i}\Vert =\int_{0}^{b}\vert \lambda_{i}(s)\vert ds,\quad \tilde{M}=\sum_{r=1}^{p+1} M^{r},\quad N=\max\Big\{ \tilde{M}, M^{p+1}(1+C)^{p}, M+\sum_{r=1}^{p-1}M^{r+2}(1+C)^{r+2}\Big\},\\
    &k=\max\{1, MN, MNb, NKb\},\quad a_{m}=3kKN^{2}\lambda_{m},\quad b_{m}=3N\lambda_{m},\quad c_{m}=\max\{a_{m},b_{m}\},\\
    &d_{1}=3kKN\Big[\Vert h\Vert+N\Vert \xi_{0}\Vert\Big],\quad  d_{2}=3N\Vert \xi_{0}\Vert+3NDV,\quad d=\max\{d_{1},d_{2}\},\\
    &D=\max \Big\{ \Vert D_{i}\Vert,\quad for \quad i=1,\dots,p\Big\},\quad V=\max \Big\{ \Vert v_{i}\Vert,\quad for \quad i=1,\dots,p\Big\}
\end{align*}

\bigskip
  We introduce the following hypothesis:
\bigskip

 \textbf{ $(A_{1})$:}
	\(A: D(A)\subset H\to H\) generates a semigroup  \(S(t), t > 0\) on \(H\), which is compact.
\bigskip

\textbf{$(A_{2})$:}
	The function \( \kappa: \mathscr{I} \times H \to H \) is continuous, and $\exists$ $\lambda_{m}(\cdot)\in L^{1}(\mathscr{I},R^{+})$ and $\varphi_{m}(\cdot)\in L^{1}(H,R^{+}), m=1,\dots, q$, such that
	\bigskip

 \begin{align*}
     \Vert \kappa(t,\xi(t))\Vert\leq \sum_{m=1}^{q}\lambda_{m}(t)\varphi_{m}(\xi)\quad \forall (t,\xi)\in \mathscr{I}\times H.
 \end{align*}

\bigskip

\textbf{$(A_{3})$:} For every \(\alpha> 0\),
\[
\limsup_{r \to \infty} \left( r - \sum_{m=1}^{q} \frac{c_{m}}{\alpha}\sup \left\{ \varphi_{m}(\xi) : \|\xi\| \leq r \right\} \right)=\infty.
\]

\bigskip

\textbf{$(A_{4})$:} The function \( \kappa : \mathscr{I} \times H \to H \) is uniformly bounded and continuous, meaning that there is a  \( N_1 > 0 \) such that

\[
\| \kappa(t,\xi) \| \leq N_1 \quad \text{for all} \ (t,\xi) \in \mathscr{I} \times H.
\]

\bigskip

In the following paragraph, we will indicate that system \eqref{eq1} is $\mathcal{A}$-controllable if, for every \( \alpha > 0 \), there is a \( \xi(\cdot) \in PC([0, b], H) \) so that

	\begin{align}\label{eq2}
	\xi(t)=
		\begin{cases}
			S(t)\xi(0)+\int_{0}^{t} S(t-s)\big[\Omega u(s)+\kappa(s,\xi(s))\big]ds,\quad  0\leq t\leq t_{1},\\
			\\
		S(t-t_{k})\xi(t^{+}_{k})+\int_{t_{k}}^{t} S(t-s)\big[\Omega u(s)+\kappa(s,\xi(s))\big]ds,\quad t_{k}<t\leq t_{k+1},\quad k=1,2,\dots, p,\\
		\end{cases}
	\end{align}
where
\begin{align}\label{eq3}
	\xi(t^{+}_{k})=&\prod_{j=k}^{1}(\mathcal{I}+B_{j})S(t_{j}-t_{j-1})\xi_{0}\nonumber\\
	+&\sum_{i=1}^{k}\prod_{j=k}^{i+1}(\mathcal{I}+B_{j})S(t_{j}-t_{j-1})
	(\mathcal{I}+B_{i})\int_{t_{i-1}}^{t_{i}}S(t_{i}-s)\Omega u(s)ds\nonumber\\
 +&\sum_{i=1}^{k}\prod_{j=k}^{i+1}(\mathcal{I}+B_{j})S(t_{j}-t_{j-1})
	(\mathcal{I}+B_{i})\int_{t_{i-1}}^{t_{i}}S(t_{i}-s)\kappa(s,\xi(s))ds\\
	+&\sum_{i=2}^{k}\prod_{j=k}^{i}(\mathcal{I}+B_{j}) S(t_{j}-t_{j-1}) D_{i-1}v_{i-1}+D_{k}v_{k},\nonumber
\end{align}

\begin{align}\label{l1}
    u_{\alpha}(s)&=\bigg( \sum_{k=1}^{p}\Omega^{*}S^{*}(t_{k}-s)\prod_{i=k+1}^{p}S^{*}(t_{i}-t_{i-1})S^{*}(b-t_{p})\chi_{(t_{k-1},t_{k})}+\Omega^{*}S^{*}(b-s)\chi_{(t_{p},b)}\bigg)\tilde{\varphi}_{\alpha},
\end{align}
\begin{align}\label{l111}
    v^{\alpha}_{p}=D^{*}_{p}S^{*}(b-t_{p})\tilde{\varphi}_{\alpha},\quad  v^{\alpha}_{k}=D^{*}_{k}\prod_{i=k}^{p}S^{*}(t_{i} -t_{i-1})(I +B^{*}_i)S^{*}(b-t_p)\tilde{\varphi}_{\alpha},\quad k=1,\dots, p-1,
\end{align}
\begin{align*}
\tilde{\varphi}_{\alpha}(\xi(\cdot))=&\Big(\alpha\mathcal{I}+\Theta^{t_{p}}_{0}+\Gamma^{b}_{t_{p}}+\tilde{\Theta}^{t_{p}}_{0}+\tilde{\Gamma}^{b}_{t_{p}}\Big)^{-1}\bigg(h-S(b-t_{p})\prod_{j=p}^{1}(\mathcal{I}+B_{j})S(t_{j}-t_{j-1})\xi_{0}\\
    -&S(b-t_{p})\sum_{i=1}^{p}\prod_{j=p}^{i+1}(\mathcal{I}+B_{j})S(t_{j}-t_{j-1})
	(\mathcal{I}+B_{i})\int_{t_{i-1}}^{t_{i}}S(t_{i}-s)\kappa(s,\xi(s))ds\\
 -&\int_{t_{p}}^{b} S(b-s)\kappa(s,\xi(s))ds\bigg).
\end{align*}

For piecewise continuous functions, which are functions that may have somer discontinuities on an interval, the Ascoli–Arzelà theorem can be adapted, but certain conditions are required to account for these discontinuities.
We present an extended version of the Ascoli–Arzelà theorem, as demonstrated by W. Wei, X. Xiang, and Y. Peng in their work on \( PC(I,  \mathfrak{X}) \) in \cite{9}, where \( \mathfrak{X} \) denotes a Banach space. This extended version generalizes the classical result to the space of piecewise continuous functions, providing conditions under which a set in \( PC(I, \mathfrak{X}) \) is relatively compact.

\begin{theorem} \textbf{(Ascoli–Arzelà theorem  }\label{Ascoli-Arsela}
Assume \( \mathfrak{W} \subseteq PC(I, \mathfrak{X}) \). If the following conditions are held:
\begin{enumerate}
    \item Uniform Boundedness: The set \( \mathfrak{W} \) is a uniformly bounded subset of \( PC(I, \mathfrak{X}) \).
    \item Equicontinuity on subintervals: The set \( \mathfrak{W} \) is equicontinuous in \( I_i = (t_i, t_{i+1}) \), where \( i = 0, 1, 2, \dots, n \), with \( t_0 = 0 \) and \( t_{n+1} = T \).
    \item Control at Discontinuities: \( \mathfrak{W}(t) = \{\xi(t) \mid \xi \in \mathfrak{W}, \ t \in I \setminus D \} \), \( \mathfrak{W}(t_i + 0) = \{\xi(t_i + 0) \mid \xi \in \mathfrak{W}\} \), and \( \mathfrak{W}(t_i - 0) = \{\xi(t_i - 0) \mid \xi \in \mathfrak{W}\} \) are relatively compact subsets of \( \mathfrak{X} \).
\end{enumerate}
Then \( \mathfrak{W} \subseteq PC(I, \mathfrak{X}) \) is a relatively compact.
\end{theorem}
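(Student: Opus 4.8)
The plan is to reduce the assertion to the classical Ascoli–Arz\`ela theorem applied separately on each of the finitely many closed subintervals, and then to recombine the resulting limits by a finite diagonal argument. The guiding idea is that a piecewise continuous function is nothing but a finite tuple of continuous functions on closed subintervals, once one fills in the endpoints with the existing one-sided limits.

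First I would observe that every $\xi \in \mathfrak{W}$, restricted to an open subinterval $I_i = (t_i, t_{i+1})$, admits a continuous extension $\tilde{\xi}_i$ to the closed interval $[t_i, t_{i+1}]$ obtained by assigning the one-sided limits $\xi(t_i + 0)$ and $\xi(t_{i+1} - 0)$ at the two endpoints; these limits exist by the very definition of $PC(I, \mathfrak{X})$. This produces, for each $i \in \{0, 1, \dots, n\}$, a family $\tilde{\mathfrak{W}}_i = \{\tilde{\xi}_i : \xi \in \mathfrak{W}\} \subseteq C([t_i, t_{i+1}], \mathfrak{X})$, and the assignment $\xi \mapsto (\tilde{\xi}_0, \dots, \tilde{\xi}_n)$ embeds $\mathfrak{W}$ into the finite product $\prod_{i=0}^{n} C([t_i, t_{i+1}], \mathfrak{X})$ compatibly with the relevant supremum norms.

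Next I would verify that each $\tilde{\mathfrak{W}}_i$ satisfies the three hypotheses of the classical theorem on the closed interval $[t_i, t_{i+1}]$. Uniform boundedness is immediate from condition (1). Pointwise relative compactness of $\tilde{\mathfrak{W}}_i(t)$ at interior $t$ follows from the relative compactness of $\mathfrak{W}(t)$ in condition (3), while at the endpoints it follows from the relative compactness of the trace sets $\mathfrak{W}(t_i + 0)$ and $\mathfrak{W}(t_{i+1} - 0)$, also part of condition (3). For equicontinuity on the whole closed interval I would combine the equicontinuity on the open interval granted by condition (2) with the relative compactness of the endpoint sets. With these verified, the classical Ascoli–Arz\`ela theorem gives that each $\tilde{\mathfrak{W}}_i$ is relatively compact in $C([t_i, t_{i+1}], \mathfrak{X})$. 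Then, given any sequence in $\mathfrak{W}$, a finite diagonal procedure over $i = 0, 1, \dots, n$ extracts a single subsequence whose extensions converge uniformly on every $[t_i, t_{i+1}]$; the limiting pieces reassemble into an element of $PC(I, \mathfrak{X})$ with the prescribed jump structure, and uniform convergence on each piece yields convergence in the $PC$-norm, so $\mathfrak{W}$ is relatively compact.

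The main obstacle I expect is the equicontinuity step at the jump points: condition (2) asserts equicontinuity only on the \emph{open} subintervals, so some care is needed to upgrade it to equicontinuity of the extended families $\tilde{\mathfrak{W}}_i$ on the \emph{closed} subintervals. The decisive point is that condition (3), through the relative compactness of the endpoint trace sets $\mathfrak{W}(t_i \pm 0)$, forces the one-sided limits to be approached uniformly over $\xi \in \mathfrak{W}$; without this uniformity the extensions need not form an equicontinuous family and the reduction to the classical theorem would fail. Everything else is routine bookkeeping with the supremum norm on finitely many pieces.
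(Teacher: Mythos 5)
The overall architecture you propose—extending each restriction $\xi|_{(t_i,t_{i+1})}$ to the closed subinterval via its one-sided limits, applying the classical Ascoli--Arzel\`a theorem on each $[t_i,t_{i+1}]$, and recombining by a finite diagonal extraction—is the standard and correct route to this result (the paper itself gives no proof, citing Wei--Xiang--Peng for it). However, your justification of the one step you yourself single out as decisive is wrong. You claim that relative compactness of the trace sets $\mathfrak{W}(t_i \pm 0)$ forces the one-sided limits to be approached uniformly over $\xi \in \mathfrak{W}$. It does not. Take $\mathfrak{X}=\mathbb{R}$, $I=[0,1]$, no interior jumps, and $\xi_n(t)=\min(nt,1)$. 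This family is uniformly bounded, equicontinuous at every point of $(0,1)$ (near any $t_0>0$ all but finitely many $\xi_n$ are constant there), every $\mathfrak{W}(t)$ is a subset of $[0,1]$ hence relatively compact, and the trace set $\mathfrak{W}(0+0)=\{0\}$ is a single point. Yet $\sup_n\sup_{0<t<\delta}\vert\xi_n(t)-\xi_n(0+0)\vert=1$ for every $\delta>0$: the approach to the limits is not uniform, the extended family is not equicontinuous at $0$, and indeed $\{\xi_n\}$ has no uniformly convergent subsequence, since its pointwise limit is discontinuous. So compactness of the trace sets cannot supply the uniformity you need; nothing in condition (3) controls the rate at which an individual $\xi$ approaches its one-sided limit.

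The same example shows that the theorem is simply false if ``equicontinuous in $I_i$'' is read pointwise, which is the reading under which your repair attempt is needed. The hypothesis must be read as uniform equicontinuity on each open subinterval: for every $\varepsilon>0$ there is $\delta>0$ such that $\Vert\xi(s)-\xi(t)\Vert<\varepsilon$ for all $\xi\in\mathfrak{W}$ and all $s,t\in I_i$ with $\vert s-t\vert<\delta$. Under that reading your ``obstacle'' evaporates: letting $s\to t_i^{+}$ in this inequality gives $\Vert\xi(t_i+0)-\xi(t)\Vert\le\varepsilon$ for all $t\in(t_i,t_i+\delta)$ and all $\xi$ simultaneously, so the extended families $\tilde{\mathfrak{W}}_i$ are equicontinuous on the closed subintervals with no appeal to condition (3) at all; condition (3) is needed only for pointwise relative compactness, including at the endpoints. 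With that single correction the rest of your proof—classical Ascoli--Arzel\`a on each piece, finite diagonal extraction, reassembly of the limit into an element of $PC(I,\mathfrak{X})$, and convergence in the $PC$-norm—goes through.
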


This result is significant in applications involving piecewise continuous functions, as it allows for compactness considerations in the presence of discontinuities. It is particularly useful in the analysis of impulsive systems in control theory and differential equations, where piecewise continuous functions model sudden state changes.

\section{$\mathcal{A}$-controllability of IDEs}

The Schauder Fixed-Point Theorem (SFPT) is a foundational result in functional analysis that provides conditions under which a function has at least one fixed point. It is particularly useful in proving the existence of solutions to various types of issues in analysis and differential equations.

In the following theorem to show existence of solution we apply SFPT.

\begin{theorem}
Under assumptions $A_{1}-A_{3}$ the system \eqref{eq1} has a solution on \( \mathscr{I} \) for every \( 0 < \alpha < 1 \); that is, $F_{\alpha}$ has a fixed point.
\end{theorem}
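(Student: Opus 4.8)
The plan is to realize the solutions of \eqref{eq1} as fixed points of the operator $F_\alpha$ acting on $PC([0,b],H)$ that is defined by the right-hand side of \eqref{eq2}, where the distributed control $u_\alpha$ is given by \eqref{l1}, the impulse controls $v_k^\alpha$ by \eqref{l111}, and the jump values $\xi(t_k^+)$ by \eqref{eq3}, all of which depend on $\xi$ through $\tilde\varphi_\alpha$. I would then verify the three hypotheses of the Schauder Fixed-Point Theorem on a suitable ball, namely that $F_\alpha$ maps some closed, bounded, convex set into itself, that $F_\alpha$ is continuous there, and that the image is relatively compact.

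First I would establish the self-mapping property. Fixing $0<\alpha<1$, I consider the ball $B_r=\{\xi\in PC([0,b],H):\|\xi\|\le r\}$. Using the growth estimate $(A_2)$, the uniform operator bounds $M,C,D,V,N,k$, and the resolvent bound $\|(\alpha\mathcal{I}+\Gamma^{b}_{t_p}+\tilde\Gamma^{b}_{t_p}+\Theta^{t_p}_0+\tilde\Theta^{t_p}_0)^{-1}\|\le 1/\alpha$ (since the operator inside is a sum of $\alpha\mathcal{I}$ and nonnegative self-adjoint operators), I would collect the contributions of the integral term, the impulse sums of \eqref{eq3}, and the control terms into an inequality of the form $\|F_\alpha\xi\|\le\sum_{m=1}^{q}\frac{c_m}{\alpha}\sup\{\varphi_m(\eta):\|\eta\|\le r\}+\frac{d}{\alpha}$. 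The constants $a_m,b_m,c_m,d_1,d_2,d$ in the notation are tailored precisely so that this bound holds, and assumption $(A_3)$, which forces $\limsup_{r\to\infty}\big(r-\sum_m\frac{c_m}{\alpha}\sup\{\varphi_m:\|\cdot\|\le r\}\big)=\infty$, then guarantees a radius $r$ with $F_\alpha(B_r)\subseteq B_r$.

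Continuity of $F_\alpha$ on $B_r$ follows from the continuity of $\kappa$ in $(A_2)$: if $\xi^{n}\to\xi$ in $PC$, then $\kappa(s,\xi^{n}(s))\to\kappa(s,\xi(s))$ pointwise with an $L^1$ dominating function, so the Dominated Convergence Theorem yields convergence of every integral term, while the controls $u_\alpha$ and $v_k^\alpha$ depend continuously on $\xi$ through $\tilde\varphi_\alpha$, so those terms converge too.

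The crucial step, and the main obstacle, is the relative compactness of $F_\alpha(B_r)$, where I would invoke the extended Ascoli--Arzel\`a Theorem~\ref{Ascoli-Arsela}. Uniform boundedness is immediate from the self-mapping step. For equicontinuity on each subinterval $(t_k,t_{k+1})$, I would estimate $\|(F_\alpha\xi)(\tau_2)-(F_\alpha\xi)(\tau_1)\|$ and control the integral terms by exploiting the compactness of the semigroup in $(A_1)$, which makes the family $\{S(t):t\ge\epsilon\}$ uniformly continuous in the operator norm and thereby tames the difference $S(\tau_2-s)-S(\tau_1-s)$. For relative compactness of the slices $\mathfrak{W}(t)$, $\mathfrak{W}(t_k^+)$, $\mathfrak{W}(t_k^-)$, I would use the standard truncation device: writing $\int_0^{t-\epsilon}S(t-s)[\,\cdots\,]\,ds=S(\epsilon)\int_0^{t-\epsilon}S(t-s-\epsilon)[\,\cdots\,]\,ds$, the compactness of $S(\epsilon)$ sends the bounded truncated integral into a relatively compact set, while the leftover $\int_{t-\epsilon}^{t}$ piece is uniformly small. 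The impulse terms of \eqref{eq3} are treated the same way, each being $S$ of a strictly positive time applied to a bounded set. The real subtlety is that the controls $u_\alpha$ and $v_k^\alpha$ contain the operator-valued kernels and themselves depend on $\xi$, so the compactness argument must also be applied to $\Omega u_\alpha$ inside the integral; once all pieces are placed in relatively compact sets, Theorem~\ref{Ascoli-Arsela} yields relative compactness of $F_\alpha(B_r)$, and Schauder's theorem then produces the desired fixed point.
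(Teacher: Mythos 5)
Your proposal is correct and follows essentially the same route as the paper's proof: the same operator $F_\alpha$ built from \eqref{eq2}, \eqref{eq3}, \eqref{l1}, \eqref{l111}, the same self-mapping step where $(A_3)$ together with the bound $\bigl\Vert(\alpha\mathcal{I}+\Gamma^{b}_{t_p}+\tilde\Gamma^{b}_{t_p}+\Theta^{t_p}_{0}+\tilde\Theta^{t_p}_{0})^{-1}\bigr\Vert\le 1/\alpha$ yields a radius $r(\alpha)$ with $F_\alpha(B_{r(\alpha)})\subseteq B_{r(\alpha)}$, the same truncation-by-$S(\tau)$ device combined with the extended Ascoli--Arzel\`a Theorem~\ref{Ascoli-Arsela} for relative compactness, and Schauder's theorem to conclude. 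If anything, your sketch of the continuity of $F_\alpha$ via dominated convergence is more explicit than the paper, which merely asserts it.
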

\begin{proof}
The major purpose of this section is to establish the requirements for the solvability of system \eqref{eq2} and \eqref{l1} for \(\alpha >0\).  In the  space \(PC(\mathscr{I}, H)\), we consider the set
\[
B_{r(\alpha)} = \{ \xi(\cdot) \in PC(\mathscr{I}, H) \mid \xi(0) = \xi_0, \|\xi\| \leq r(\alpha) \},
\]
where \(r(\alpha)>0\) is a constant.

We introduce an operator \(F_\alpha, \alpha > 0\) on \(PC(\mathscr{I}, H)\) in the following way
\[
F_\alpha(\xi) = z,
\]
such that
\begin{align}
	z(t)=
		\begin{cases}
			S(t)\xi(0)+\int_{0}^{t} S(t-s)\big[\Omega v(s)+\kappa(s,\xi(s))\big]ds,\quad  0\leq t\leq t_{1},\\
			\\
		S(t-t_{k})\xi(t^{+}_{k})+\int_{t_{k}}^{t} S(t-s)\big[\Omega v(s)+\kappa(s,\xi(s))\big]ds,\quad t_{k}<t\leq t_{k+1},\quad k=1,2,\dots, p,\\
		\end{cases}
	\end{align}
where
\begin{align}\label{eq3654}
	\xi(t^{+}_{k})=&\prod_{j=k}^{1}(\mathcal{I}+B_{j})S(t_{j}-t_{j-1})\xi_{0}\nonumber\\
	+&\sum_{i=1}^{k}\prod_{j=k}^{i+1}(\mathcal{I}+B_{j})S(t_{j}-t_{j-1})
	(\mathcal{I}+B_{i})\int_{t_{i-1}}^{t_{i}}S(t_{i}-s)\Omega u_{\alpha}(s)ds\nonumber\\
 +&\sum_{i=1}^{k}\prod_{j=k}^{i+1}(\mathcal{I}+B_{j})S(t_{j}-t_{j-1})
	(\mathcal{I}+B_{i})\int_{t_{i-1}}^{t_{i}}S(t_{i}-s)\kappa(s,\xi(s))ds\\
	+&\sum_{i=2}^{k}\prod_{j=k}^{i}(\mathcal{I}+B_{j}) S(t_{j}-t_{j-1}) D_{i-1}v_{i-1}+D_{k}v_{k}.\nonumber
\end{align}
\begin{align}\label{l119}
    v(s)&=\bigg( \sum_{k=1}^{p}\Omega^{*}S^{*}(t_{k}-s)\prod_{i=k+1}^{p}S^{*}(t_{i}-t_{i-1})S^{*}(b-t_{p})\chi_{(t_{k-1},t_{k})}+\Omega^{*}S^{*}(b-s)\chi_{(t_{p},b)}\bigg)\tilde{\varphi}_{\alpha},
\end{align}
where
\begin{align*}
\tilde{\varphi}_{\alpha}=&\Big(\alpha\mathcal{I}+\Theta^{t_{p}}_{0}+\Gamma^{b}_{t_{p}}+\tilde{\Theta}^{t_{p}}_{0}+\tilde{\Gamma}^{b}_{t_{p}}\Big)^{-1}\bigg(h-S(b-t_{p})\prod_{j=p}^{1}(\mathcal{I}+B_{j})S(t_{j}-t_{j-1})\xi_{0}\\
    -&S(b-t_{p})\sum_{i=1}^{p}\prod_{j=p}^{i+1}(\mathcal{I}+B_{j})S(t_{j}-t_{j-1})
	(\mathcal{I}+B_{i})\int_{t_{i-1}}^{t_{i}}S(t_{i}-s)\kappa(s,\xi(s))ds\\
 -&\int_{t_{p}}^{b} S(b-s)\kappa(s,\xi(s))ds\bigg)
\end{align*}
\bigskip
To enhance clarity, the proof of the theorem will be broken down into two steps due to its length and complexity.

\bigskip

\textbf{Step 1.}  For any \(\alpha > 0\) $\exists$  \(r(\alpha)>0\) constant such that the mapping \(F_\alpha\) satisfies:
$ F_\alpha : B_{r(\alpha)} \rightarrow B_{r(\alpha)}. $  Let
\begin{align*}
    \Phi_{m}(r)=\sup \Big\{ \varphi_{m}(\xi): \Vert y
    \Vert \leq r,  y\in H\Big\}.
\end{align*}
By assumption $(A_{3})$, for any \(\alpha > 0\) $\exists$  \(r(\alpha)>0\) such that
\begin{align*}
    \frac{d}{\alpha}+\sum_{m=1}^{q}\frac{c_{m}}{\alpha}\Phi_{m}(r(\alpha))\leq r(\alpha).
\end{align*}
If \( \xi(\cdot) \in B_{r(\alpha)} \), then we obtain
\begin{align*}
    \Vert v(s)\Vert &\leq\bigg\Vert \sum_{k=1}^{p}\Omega^{*}S^{*}(t_{k}-s)\prod_{i=k+1}^{p}S^{*}(t_{i}-t_{i-1})S^{*}(b-t_{p})\chi_{(t_{k-1},t_{k})}+\Omega^{*}S^{*}(b-s)\chi_{(t_{p},b)}\bigg\Vert \Vert\tilde{\varphi}_{\alpha}\Vert\\
  &\leq  \Vert \Omega^{*}\Vert\sum_{r=1}^{p+1} M^{r}\Vert\tilde{\varphi}_{\alpha}\Vert\leq \frac{1}{\alpha} K\tilde{M}\bigg\Vert h-S(b-t_{p})\prod_{j=p}^{1}(\mathcal{I}+B_{j})S(t_{j}-t_{j-1})\xi_{0}\\
  -&S(b-t_{p})\sum_{i=1}^{p}\prod_{j=p}^{i+1}(\mathcal{I}+B_{j})S(t_{j}-t_{j-1})
	(\mathcal{I}+B_{i})\int_{t_{i-1}}^{t_{i}}S(t_{i}-s)\kappa(s,\xi(s))ds\\
    -&\int_{t_{p}}^{b} S(b-s)\kappa(s,\xi(s))ds\bigg\Vert\leq \frac{1}{\alpha} K\tilde{M}\bigg\Vert h-S(b-t_{p})\prod_{j=p}^{1}(\mathcal{I}+B_{j})S(t_{j}-t_{j-1})\xi_{0}\\
    -&S(b-t_{p})\sum_{i=1}^{p}\prod_{j=p}^{i+1}(\mathcal{I}+B_{j})S(t_{j}-t_{j-1})
	(\mathcal{I}+B_{i})\int_{t_{i-1}}^{t_{i}}S(t_{i}-s)\sum_{m=1}^{q}\lambda_{m}(s)\varphi_{m}(\xi(s))ds\\
     -&\int_{t_{p}}^{b} S(b-s)\sum_{m=1}^{q}\lambda_{m}(s)\varphi_{m}(\xi(s))ds\bigg\Vert\leq \frac{1}{\alpha} K\tilde{M} \bigg[\Vert h\Vert+M^{p+1}(1+C)^{p} \Vert \xi_{0}\Vert\\
     &+\sum_{r=1}^{p-1}M^{r+2}(1+C)^{r+2}\int_{0}^{b}\sum_{m=1}^{q}\lambda_{m}(s)\varphi_{m}(\xi(s))ds+M\int_{0}^{b}\sum_{m=1}^{q}\lambda_{m}(s)\varphi_{m}(\xi(s))ds\bigg]\\
 &\leq \frac{1}{\alpha} K\tilde{M}\bigg[\Vert h\Vert+M^{p+1}(1+C)^{p} \Vert \xi_{0}\Vert+\Big(M+\sum_{r=1}^{p-1}M^{r+2}(1+C)^{r+2}\Big)\sum_{m=1}^{q} \Vert \lambda_{m}\Vert \Phi_{m}(r(\alpha))\bigg]\\
 &\leq \frac{1}{\alpha} K\tilde{M}\bigg[\Vert h\Vert+M^{p+1}(1+C)^{p} \Vert \xi_{0}\Vert\bigg]+\frac{1}{\alpha} K\tilde{M} \bigg[\Big(M+\sum_{r=1}^{p-1}M^{r+2}(1+C)^{r+2}\Big)\sum_{m=1}^{q} \Vert \lambda_{m}\Vert \Phi_{m}(r(\alpha))\bigg]\\
 &\leq \frac{1}{\alpha} KN\Big[\Vert h\Vert+N\Vert \xi_{0}\Vert\Big]+\frac{1}{\alpha} KN^{2}\sum_{m=1}^{q} \Vert \lambda_{m}\Vert \Phi_{m}(r(\alpha))\leq \frac{d}{3k\alpha}+\frac{1}{3k}\sum_{m=1}^{q}\frac{c_{m}}{\alpha}\Phi_{m}(r(\alpha))\\
 &=\frac{1}{3k}\bigg(\frac{d}{\alpha}+\sum_{m=1}^{q}\frac{c_{m}}{\alpha}\Phi_{m}(r(\alpha))\bigg)\leq \frac{r(\alpha)}{3k}.
\end{align*}
For $0\leq t\leq t_{1}$, we have

\begin{align*}
    \Vert z\Vert &\leq M\Vert \xi_{0}\Vert+MKb\Vert v\Vert+M\int_{0}^{t}\sum_{m=1}^{q}\lambda_{m}(s)\varphi_{m}(\xi(s))ds\\
    &\leq \frac{1}{3}\Big[ d+\sum_{m=1}^{q} c_{m}\Phi_{m}(r(\alpha))\Big]+k\Vert v\Vert\leq \frac{\alpha r(\alpha)}{3}+\frac{r(\alpha)}{3}\leq \frac{2r(\alpha)}{3}.
\end{align*}
For $ t_{k}<t\leq t_{k+1}, k=1,2,\dots, p$, we have
\begin{align*}
    \Vert z\Vert &\leq M^{k+1}(1+C)^{k}\Vert \xi_{0}\Vert+\sum_{r=1}^{k-1}M^{r+2}(1+C)^{r+2}Kb\Vert  v\Vert \\
    &+\sum_{r=1}^{k-1}M^{r+2}(1+C)^{r+2}\int_{0}^{b}\sum_{m=1}^{q}\lambda_{m}(s)\varphi_{m}(\xi(s))ds\\
    &+\sum_{r=1}^{k}M^{r}(1+C)^{r-1}DV+MKb\Vert v\Vert\\
    &+M\int_{0}^{b}\sum_{m=1}^{q}\lambda_{m}(s)\varphi_{m}(\xi(s))ds\\
    &\leq M^{k+1}(1+C)^{k}\Vert \xi_{0}\Vert+\sum_{r=1}^{k}M^{r}(1+C)^{r-1}DV\\
    &+\bigg(M+\sum_{r=1}^{k-1}M^{r+2}(1+C)^{r+2}\bigg)Kb\Vert  v\Vert \\
     &+\bigg(M+\sum_{r=1}^{k-1}M^{r+2}(1+C)^{r+2}\bigg)\sum_{m=1}^{q}\Vert \lambda_{m}\Vert\Phi_{m}(r(\alpha))\\
       &\leq N\Vert \xi_{0}\Vert+NDV+N\sum_{m=1}^{q}\Vert \lambda_{m}\Vert\Phi_{m}(r(\alpha))+NKb\Vert  v\Vert      \\
        &\leq \frac{1}{3}\Big[ d+\sum_{m=1}^{q} c_{m}\Phi_{m}(r(\alpha))\Big]+k\Vert v\Vert\leq \frac{\alpha r(\alpha)}{3}+\frac{r(\alpha)}{3}\leq \frac{2r(\alpha)}{3}.
\end{align*}

 So

 \begin{align*}
     \Vert F_{\alpha}(\xi)(t)\Vert=\Vert z(t)\Vert+\Vert v\Vert\leq r(\alpha).
 \end{align*}
 Then $F_{\alpha}$ maps $B_{r(\alpha)}$ into itself.
\bigskip

\textbf{Step 2.}
For any \(\alpha > 0\), the operator \(F_\alpha\) maps the set \(B_{r(\alpha)}\) into a subset of itself that is relatively compact. Additionally, \(F_\alpha\) possesses a fixed point within \(B_{r(\alpha)}\).

In accordance with the Ascoli–Arzelà theorem, it is necessary to show that

\bigskip

$(i)$  For  $\forall$ \(t \in \mathscr{I}\), the set \(\mathcal{V}(t) = \{(F_{\alpha} \xi)(t) : \xi(\cdot) \in B_{r(\alpha)}\}\) is relatively compact.

\bigskip

$(ii)$  The set \(\mathcal{V} = \{ (F_{\alpha}\xi)(\cdot) \mid \xi(\cdot) \in B_{r(\alpha)} \} \) is equicontinuous on \(\mathscr{I}\).
\bigskip

Let us prove part $(i)$. The case when \( t = 0 \) is straightforward, as \( \mathcal{V}(0) = \{\xi_0\} \). Now, let \( t \) be a fixed real number such that \( 0 < t \leq b \), and consider a real number \( \tau \) satisfying \( 0 < \tau< t \). Define

\begin{align}\label{mop1}
 (F^{\tau}_{\alpha}\xi)(t)=S(t)\xi_{0}+S(\tau)\mathcal{V}(t-\tau).
\end{align}
Since \( z(t - r) \) is bounded on \( B_{r(\alpha)} \) and \( S(t) \) is compact, the set
\[
\mathcal{V}_{\tau}(t) = \{ (F^{\tau}_{\alpha}\xi)(t) : \xi(\cdot) \in B_{r(\alpha)} \}
\]
is relatively compact in \( H \). This implies that there exists a finite set \( \{ y_i \mid 1 \leq i \leq n \} \) in \( H \) such that
\[
\mathcal{V}_{\tau}(t)  \subset \bigcup_{i=1}^n N\big(y_i, \frac{\varepsilon}{2}\big),
\]
where \( N\big(y_i, \frac{\varepsilon}{2}\big) \) represents an open ball in \( H \) with center \( \xi_i \) and radius \( \varepsilon/2 \). Additionally, for $0\leq t\leq t_{1}$, we have
\begin{align*}
\Vert (F_{\alpha}\xi)(t)- (F^{\tau}_{\alpha}\xi)(t)\Vert &= \bigg\Vert\int_{t-\tau}^t S(t - s) [\Omega v(s) + \kappa(s, \xi(s))] ds\bigg\Vert\\
&\leq MK\tau\Vert v\Vert +M\int_{t-\tau}^{t}\sum_{m=1}^{q}\lambda_{m}(s)\varphi_{m}(\xi(s))ds\\
&\leq MK\tau\frac{r(\alpha)}{3k} +M\int_{t-\tau}^{t}\sum_{m=1}^{q}\lambda_{m}(s)ds\Phi_{m}(r(\alpha))\leq \frac{\varepsilon}{2}.
\end{align*}

 Consider interval $(t_{1},t_{2}]$ , we define

 \begin{align*}
     \mathcal{V}(t_{1}+0)&=\mathcal{V}(t_{1}-0)+B_{1}\mathcal{V}(t_{1})+D_{1}v_{1}\\
     &=(I+B_{1})\mathcal{V}(t_{1})+D_{1}v_{1}.
 \end{align*}
Similarly, we have $ \mathcal{V}(t_{1}+0)$ is relatively compact. Let $\xi(t_{1})\equiv \xi_{1}$, then equation \eqref{mop1} reduces to
 \begin{align*}
   (F^{\tau}_{\alpha}\xi)(t)=S(t-t_{1})\xi_{1}+S(\tau)\mathcal{V}(t-\tau).
\end{align*}
 Furthermore,
\begin{align*}
\Vert (F_{\alpha}\xi)(t)- (F^{\tau}_{\alpha}\xi)(t)\Vert\leq MK\tau\frac{r(\alpha)}{3k} +M\int_{t-\tau}^{t}\sum_{m=1}^{q}\lambda_{m}(s)ds\Phi_{m}(r(\alpha))\leq \frac{\varepsilon}{2},
\end{align*}
  thus $\mathcal{V}(t)$ is relatively compact for $t\in (t_{1},t_{2}]$.
  \bigskip

 In general, given any $t_{k}\in\tilde{D}=\{t_{1},\dots, t_{p}\}$ for $k=1,\dots,p$,  we set
\begin{align*}
    \xi(t_{k}+0)=\xi(t_{k})
\end{align*}
 and
 \begin{align*}
     \mathcal{V}(t_{k}+0)&=\mathcal{V}(t_{k}-0)+B_{k}\mathcal{V}(t_{k})+D_{k}v_{k}=(I+B_{k})\mathcal{V}(t_{k})+D_{k}v_{k},\quad  k=1,\dots,p.
 \end{align*}
Such that , we know that $\mathcal{V}(t_{k}+0)$ is relatively compact and the
 associated $\mathcal{V}_{\tau}(t)$ over the interval $(t_{k},t_{k+1}]$ is given by
 \begin{align*}
   (F^{\tau}_{\alpha}\xi)(t)=S(t-t_{k})\xi_{k}+S(\tau)\mathcal{V}(t-\tau),\quad k=1,2,\dots, p .
\end{align*}
Similarly, for $ t_{k}<t\leq t_{k+1}, k=1,2,\dots, p$, we have
\begin{align*}
\Vert (F_{\alpha}\xi)(t)- (F^{\tau}_{\alpha}\xi)(t)\Vert\leq MK\tau\frac{r(\alpha)}{3k} +M\int_{t-\tau}^{t}\sum_{m=1}^{q}\lambda_{m}(s)ds\Phi_{m}(r(\alpha))\leq \frac{\varepsilon}{2}.
\end{align*}
 Consequently, we obtain
 \[
\mathcal{V}(t)  \subset \bigcup_{i=1}^n N\big(y_i, \varepsilon\big),
\]
Thus, for every \(t \in [0, b]\), \(V(t)\) is relatively compact in the Hilbert space \(H\).
\bigskip

To prove $(ii)$, we need to demonstrate that the set \(V = \{ (F_{\alpha}\xi)(\cdot) \mid \xi(\cdot) \in B_{r(\alpha)} \} \) is equicontinuous on \([0, b]\).  In fact, for $0<a_{1}<a_{2}\leq b$, we achive
\begin{align*}
    \Vert v(a_{2})-v(a_{1})\Vert&\leq \Bigg\Vert \sum_{k=1}^{p}\Omega^{*}\Big(S^{*}(t_{k}-a_{2})-S^{*}(t_{k}-a_{1})\Big)\prod_{i=k+1}^{p}S^{*}(t_{i}-t_{i-1})S^{*}(b-t_{p})\chi_{(t_{k-1},t_{k})}\\
    &+\Omega^{*}\Big(S^{*}(b-a_{2})-S^{*}(b-a_{1})\Big)\chi_{(t_{p},b)}\bigg\Vert\\
    &\times\frac{1}{\alpha} \bigg[\Vert h\Vert+M^{p+1}(1+C)^{p} \Vert \xi_{0}\Vert+\Big(M+\sum_{r=1}^{p-1}M^{r+2}(1+C)^{r+2}\Big)\sum_{m=1}^{q} \Vert \lambda_{m}\Vert \Phi_{m}(r(\alpha))\bigg].
\end{align*}
For $0<a_{1}<a_{2}\leq t_{1}$, we get
\begin{align}\label{1}
    \Vert z(a_{2})-z(a_{1})\Vert \leq &\Vert S(a_2) - S(a_1)\Vert \Vert \xi_0\Vert + KM \int_{a_{1}}^{a_{2}} \Vert v(s) \Vert ds\nonumber\\
    +& K \int_{0}^{a_1} \Vert S(a_2 - s) - S(a_1 - s)\Vert \Vert v(s) \Vert ds\nonumber\\
+& M \int_{a_{1}}^{a_{2}} \sum_{m=1}^{q} \lambda_m(s) \varphi_m(\xi(s)) \, ds\nonumber \\
+&  \int_{0}^{a_1} \Vert S(a_2 - s) - S(a_1 - s)\Vert \sum_{m=1}^{q} \lambda_m(s) \varphi_m(\xi(s)) \, ds\nonumber\\
\leq &\Vert S(a_2) - S(a_1)\Vert \Vert \xi_0\Vert + KM \int_{a_1}^{a_2} \Vert v(s) \Vert ds\\
+& K \int_{0}^{a_1} \Vert S(a_2 - s) - S(a_1 - s)\Vert \Vert v(s) \Vert ds\nonumber\\
+& M \sum_{m=1}^{q} \int_{a_1}^{a_2}  \lambda_m(s) ds \Phi_m(r(\alpha))\nonumber\\
+&  \sum_{m=1}^{q}\int_{0}^{a_1} \Vert S(a_2 - s) - S(a_1 - s)\Vert  \lambda_m(s)  ds \Phi_m(r(\alpha))\nonumber\\
=& I_1 + I_2 + I_3 + I_4 + I_5.\nonumber
\end{align}

For $t_{k}<a_{1}<a_{2} \leq t_{k+1}, \quad k=1,2,\dots,p$, we get
\begin{align}\label{2}
    \Vert z(a_{2})-z(a_{1})\Vert \leq &\Vert S(a_2) - S(a_1)\Vert \Vert \xi(t_{k}^{+})\Vert + KM \int_{a_1}^{a_2} \Vert v(s) \Vert ds + K \int_{t_{k}}^{a_1} \Vert S(a_2 - s) - S(a_1 - s)\Vert \Vert v(s) \Vert ds\nonumber\\
+& M \sum_{m=1}^{q} \int_{a_1}^{a_2}  \lambda_m(s) ds \Phi_m(r(\alpha)) +  \sum_{m=1}^{q}\int_{t_{k}}^{a_1} \Vert S(a_2 - s) - S(a_1 - s)\Vert  \lambda_m(s)  ds\Phi_m(r(\alpha))\nonumber\\
\leq &\Vert S(a_2) - S(a_1)\Vert\Big\{ (1+C)^{k} M^{k}\Vert \xi_{0}\Vert+ \sum_{r=1}^{k-1}M^{r+1}(1+C)^{r+2}Kb\Vert  v\Vert \nonumber\\
    +&\sum_{r=1}^{k-1}M^{r+1}(1+C)^{r+2}\sum_{m=1}^{q}\Vert\lambda_{m}\Vert\Phi_{m}(r(\alpha))+\sum_{r=1}^{k}M^{r-1}(1+C)^{r-1}DV\Big\}\\
+& KM \int_{a_1}^{a_2} \Vert v(s) \Vert ds + K \int_{t_{k}}^{a_1} \Vert S(a_2 - s) - S(a_1 - s)\Vert \Vert v(s) \Vert ds\nonumber\\
+& M \sum_{m=1}^{q} \int_{a_1}^{a_2}  \lambda_m(s) ds \Phi_m(r(\alpha)) +  \sum_{m=1}^{q}\int_{t_{k}}^{a_1} \Vert S(a_2 - s) - S(a_1 - s)\Vert  \lambda_m(s)  ds\Phi_m(r(\alpha))\nonumber\\
=&J_1 +J_2 +J_3 +J_4 +J_5.\nonumber
\end{align}



In equations (3.4) and (3.5), the right-hand sides are not influence on the selection of \( \xi(\cdot) \).      As \( a_2 - a_1 \to 0 \), both \( I_2 \) and \( I_4 \) (and similarly \( J_2 \) and \( J_4 \)) tend to zero. Since the semigroup \( S(\cdot) \) is compact, we can deduce that
\[
S(t_2 - s) - S(t_1 - s) \to 0 \quad \text{as} \quad a_2 - a_1 \to 0,
\]
for any \( t \) and \( s \) where \( t - s > 0 \). This implies that \( I_1 \to 0 \) and \( J_1 \to 0 \). Additionally, employing the Lebesgue dominated convergence theorem, we conclude that
\[
I_3 \to 0, \quad I_5 \to 0, \quad J_3 \to 0, \quad \text{and} \quad J_5 \to 0 \quad \text{as} \quad a_2 - a_1 \to 0,
\]
demonstrating that \( V \) is equicontinuous. Consequently, the operator \( F_\alpha Br(\alpha) \) is both equicontinuous and bounded. According to the Ascoli–Arzelà theorem, \( F_\alpha Br(\alpha) \) is relatively compact in \( PC(J, H) \). Furthermore, for every \( \alpha > 0 \), the operator \( F_\alpha \) is continuous on \( PC(J, H) \), making \( F_\alpha \) a compact, continuous operator on \( PC(J, H) \). By the SFPT, it follows that \( F_\alpha \) has a fixed point.

\end{proof}

Examine the subsequent linear system with $\kappa(\cdot)\in L^{1}(\mathscr{I},H)$.
\begin{align}\label{14}
    z(t,\xi_{0})&=\begin{cases}
			S(t)\xi(0)+\int_{0}^{t} S(t-s)\big[\Omega u_{\alpha}(s)+\kappa(s)\big]ds,\quad  0\leq t\leq t_{1},\\
			\\
		S(t-t_{k})\xi(t^{+}_{k})+\int_{t_{k}}^{t} S(t-s)\big[\Omega u_{\alpha}(s)+\kappa(s)\big]ds,\quad t_{k}<t\leq t_{k+1},\quad k=1,2,\dots, p,\\
		\end{cases}
	\end{align}
where
\begin{align*}
	\xi(t^{+}_{k})=&\prod_{j=k}^{1}(\mathcal{I}+B_{j})S(t_{j}-t_{j-1})\xi_{0}\nonumber\\
	+&\sum_{i=1}^{k}\prod_{j=k}^{i+1}(\mathcal{I}+B_{j})S(t_{j}-t_{j-1})
	(\mathcal{I}+B_{i})\int_{t_{i-1}}^{t_{i}}S(t_{i}-s)[\Omega u_{\alpha}(s)+\kappa(s)]ds\nonumber\\
	+&\sum_{i=2}^{k}\prod_{j=k}^{i}(\mathcal{I}+B_{j}) S(t_{j}-t_{j-1}) D_{i-1}v_{i-1}+D_{k}v_{k}.\nonumber
\end{align*}

\begin{lemma}
If
\begin{align*}
p=&h-S(b-t_{p})\prod_{j=p}^{1}(\mathcal{I}+B_{j})S(t_{j}-t_{j-1})\xi_{0}\\
    -&S(b-t_{p})\sum_{i=1}^{p}\prod_{j=p}^{i+1}(\mathcal{I}+B_{j})S(t_{j}-t_{j-1})
	(\mathcal{I}+B_{i})\int_{t_{i-1}}^{t_{i}}S(t_{i}-s)\kappa(s)ds\\
 -&\int_{t_{p}}^{b} S(b-s)\kappa(s)ds
\end{align*}
and if \( u_\alpha(\cdot) \in L^2(\mathscr{I}, U) \) is a control function defined by

    \begin{align}\label{17}
    u_{\alpha}(s)&=\bigg( \sum_{k=1}^{p}\Omega^{*}S^{*}(t_{k}-s)\prod_{i=k+1}^{p}S^{*}(t_{i}-t_{i-1})S^{*}(b-t_{p})\chi_{(t_{k-1},t_{k})}+\Omega^{*}S^{*}(b-s)\chi_{(t_{p},b)}\bigg)\tilde{\varphi}_{\alpha},
\end{align}
where
\begin{align*}
\tilde{\varphi}_{\alpha}=\Big(\alpha\mathcal{I}+\Theta^{t_{p}}_{0}+\Gamma^{b}_{t_{p}}+\tilde{\Theta}^{t_{p}}_{0}+\tilde{\Gamma}^{b}_{t_{p}}\Big)^{-1} p.
\end{align*}
Then
\begin{align}\label{38}
    z(b,\xi_{0}) -h
 = -\alpha\Big(\alpha\mathcal{I}+\Theta^{t_{p}}_{0}+\Gamma^{b}_{t_{p}}+\tilde{\Theta}^{t_{p}}_{0}+\tilde{\Gamma}^{b}_{t_{p}}\Big)^{-1} p
  \end{align}
  and
\begin{align*}
    z(t,\xi_{0})
      &=S(t-t_{k})\prod_{j=k}^{1}(\mathcal{I}+B_{j})S(t_{j}-t_{j-1})\xi_{0}\\
    &+S(t-t_{k})\sum_{i=1}^{k}\prod_{j=k}^{i+1}(\mathcal{I}+B_{j})S(t_{j}-t_{j-1})
	(\mathcal{I}+B_{i})\int_{t_{i-1}}^{t_{i}}S(t_{i}-s)\kappa(s)ds\\
 &+\int_{t_{k}}^{t} S(t-s)\kappa(s)ds+\Big(\Theta^{t_{p}}_{0}+\Gamma^{b}_{t_{p}}+\tilde{\Theta}^{t_{p}}_{0}+\tilde{\Gamma}^{b}_{t_{p}}\Big)S^{*}(b-t)\Big(\alpha\mathcal{I}+\Theta^{t_{p}}_{0}+\Gamma^{b}_{t_{p}}+\tilde{\Theta}^{t_{p}}_{0}+\tilde{\Gamma}^{b}_{t_{p}}\Big)^{-1}p.
  \end{align*}
\end{lemma}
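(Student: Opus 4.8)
The plan is to substitute the explicit distributed control \eqref{17} and the jump controls \(v^{\alpha}_{k}\) directly into the representation \eqref{14} and to sort the resulting expression into ``drift'' terms (those independent of the control) and ``control'' terms. For brevity write \(\Lambda:=\Theta^{t_{p}}_{0}+\Gamma^{b}_{t_{p}}+\tilde{\Theta}^{t_{p}}_{0}+\tilde{\Gamma}^{b}_{t_{p}}\), so that \(\tilde{\varphi}_{\alpha}=(\alpha\mathcal{I}+\Lambda)^{-1}p\). First I would establish \eqref{38} by taking \(t=b\) (hence \(k=p\)) in \eqref{14} and inserting the formula for \(\xi(t^{+}_{p})\). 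The terms carrying no control, namely \(S(b-t_{p})\prod_{j=p}^{1}(\mathcal{I}+B_{j})S(t_{j}-t_{j-1})\xi_{0}\) together with the two \(\kappa\)-integrals, coincide by the very definition of \(p\) with \(h-p\). Thus everything reduces to showing that the four families of control contributions assemble into \(\Lambda\tilde{\varphi}_{\alpha}\).

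The heart of the argument is the bookkeeping that matches each family of control terms to one Gramian-type operator. Restricting \(u_{\alpha}\) to a single subinterval \((t_{i-1},t_{i})\), only \(\chi_{(t_{i-1},t_{i})}\) survives, so \(u_{\alpha}(s)=\Omega^{*}S^{*}(t_{i}-s)\prod_{k=i+1}^{p}S^{*}(t_{k}-t_{k-1})S^{*}(b-t_{p})\tilde{\varphi}_{\alpha}\) and the block \(\int_{t_{i-1}}^{t_{i}}S(t_{i}-s)\Omega\Omega^{*}S^{*}(t_{i}-s)\,ds\) appears; premultiplying by \(S(b-t_{p})\prod_{j=p}^{i+1}(\mathcal{I}+B_{j})S(t_{j}-t_{j-1})(\mathcal{I}+B_{i})\) and summing over \(i=1,\dots,p\) reproduces \(\Theta^{t_{p}}_{0}\tilde{\varphi}_{\alpha}\), while the last-interval integral \(\int_{t_{p}}^{b}S(b-s)\Omega\Omega^{*}S^{*}(b-s)\,ds\,\tilde{\varphi}_{\alpha}\) is \(\Gamma^{b}_{t_{p}}\tilde{\varphi}_{\alpha}\). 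The jump controls are handled the same way: \(S(b-t_{p})D_{p}v^{\alpha}_{p}=S(b-t_{p})D_{p}D^{*}_{p}S^{*}(b-t_{p})\tilde{\varphi}_{\alpha}=\tilde{\Gamma}^{b}_{t_{p}}\tilde{\varphi}_{\alpha}\), and the remaining jump terms \(S(b-t_{p})\sum_{i=2}^{p}\prod_{j=p}^{i}(\mathcal{I}+B_{j})S(t_{j}-t_{j-1})D_{i-1}v^{\alpha}_{i-1}\) collapse to \(\tilde{\Theta}^{t_{p}}_{0}\tilde{\varphi}_{\alpha}\). Collecting gives \(z(b,\xi_{0})=h-p+\Lambda\tilde{\varphi}_{\alpha}\), and \eqref{38} follows from the one-line resolvent algebra \(\Lambda(\alpha\mathcal{I}+\Lambda)^{-1}=\mathcal{I}-\alpha(\alpha\mathcal{I}+\Lambda)^{-1}\), since then \(z(b,\xi_{0})-h=-p+\Lambda\tilde{\varphi}_{\alpha}=-\alpha(\alpha\mathcal{I}+\Lambda)^{-1}p\).

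For the second identity I would run the identical substitution but truncate the propagation at an arbitrary \(t\in(t_{k},t_{k+1}]\) instead of at \(b\). The drift part then yields exactly the first three terms of the claimed formula: the homogeneous flow of \(\xi_{0}\) through the impulses up to \(t\), and the two \(\kappa\)-integrals cut off at \(t\). For the control part the decisive tool is the semigroup factorization \(S^{*}(b-s)=S^{*}(t-s)S^{*}(b-t)\) (and its impulse-interleaved analogue), which extracts a common right factor \(S^{*}(b-t)\); the remaining forward-backward products accumulated over \([0,t]\) reassemble the Gramian-type operator, so that the control contribution becomes the displayed term \(\big(\Theta^{t_{p}}_{0}+\Gamma^{b}_{t_{p}}+\tilde{\Theta}^{t_{p}}_{0}+\tilde{\Gamma}^{b}_{t_{p}}\big)S^{*}(b-t)\tilde{\varphi}_{\alpha}\), which correctly reduces to \(\Lambda\tilde{\varphi}_{\alpha}\) at \(t=b\) where \(S^{*}(0)=\mathcal{I}\).

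I expect the main obstacle to be precisely this intermediate-\(t\) telescoping together with the index bookkeeping. At the terminal time the forward and adjoint propagators ``close up'' symmetrically into the full Gramian \(\Lambda\); for \(t<b\) one must track carefully how each factor \(S(t-s)\) and \((\mathcal{I}+B_{j})\) pairs with its adjoint counterpart inside \(u_{\alpha}\) and \(v^{\alpha}_{k}\), confirm via the semigroup property that the whole contribution over \([0,t]\) is captured through the single surviving factor \(S^{*}(b-t)\), and verify that the product ranges \(\prod_{j=k}^{i+1}\) and \(\prod_{k=i+1}^{p}\) align after cancellation (in particular one must check that the portion of the Gramian actually reconstructed is the one accumulated up to \(t\)). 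Everything else is routine substitution and the elementary resolvent identity.
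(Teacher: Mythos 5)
Your proposal is correct in substance and takes essentially the same route as the paper's own proof: substitute the control \eqref{17} into the representation \eqref{14}, regroup the control contributions into the four Gramian-type operators so that $z(t,\xi_{0})$ equals the drift terms plus $\big(\Theta^{t_{p}}_{0}+\Gamma^{b}_{t_{p}}+\tilde{\Theta}^{t_{p}}_{0}+\tilde{\Gamma}^{b}_{t_{p}}\big)S^{*}(b-t)\tilde{\varphi}_{\alpha}$, and obtain \eqref{38} at $t=b$ from the definition of $p$ together with the resolvent identity $\Lambda(\alpha\mathcal{I}+\Lambda)^{-1}=\mathcal{I}-\alpha(\alpha\mathcal{I}+\Lambda)^{-1}$ (with $\Lambda$ your abbreviation for that sum); the only real difference is ordering, since you prove the terminal-time identity first and the general-$t$ formula second, whereas the paper derives the general-$t$ formula and then sets $t=b$. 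The reservation you voice about intermediate times is genuine but is equally unaddressed in the paper's proof: for $t<b$ the substitution actually produces only the partial Gramians accumulated on $[0,t]$ (e.g.\ $\int_{t_{p}}^{t}S(t-s)\Omega\Omega^{*}S^{*}(t-s)\,ds$ rather than $\Gamma^{b}_{t_{p}}$, by the very factorization $S^{*}(b-s)=S^{*}(t-s)S^{*}(b-t)$ you invoke), so the displayed general-$t$ identity with the full operator sum holds literally only at $t=b$ --- which is the only case needed in Theorem \ref{u9}.
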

\begin{proof}
   Replacing  \eqref{17} in  \eqref{14}, we acquire the following findings.

 \begin{align*}
   & z(t,\xi_{0})=S(t-t_{k})\prod_{j=k}^{1}(\mathcal{I}+B_{j})S(t_{j}-t_{j-1})\xi_{0}\\
    &+S(t-t_{k})\sum_{i=1}^{k}\prod_{j=k}^{i+1}(\mathcal{I}+B_{j})S(t_{j}-t_{j-1})
	(\mathcal{I}+B_{i})\int_{t_{i-1}}^{t_{i}}S(t_{i}-s)B \\
 &\times \bigg( \sum_{k=1}^{p}\Omega^{*}S^{*}(t_{k}-s)\prod_{i=k+1}^{p}S^{*}(t_{i}-t_{i-1})S^{*}(b-t_{p})\chi_{(t_{k-1},t_{k})}+\Omega^{*}S^{*}(b-s)\chi_{(t_{p},b)}\bigg)\tilde{\varphi}_{\alpha}ds\\
    &+S(t-t_{k})\sum_{i=1}^{k}\prod_{j=k}^{i+1}(\mathcal{I}+B_{j})S(t_{j}-t_{j-1})
	(\mathcal{I}+B_{i})\int_{t_{i-1}}^{t_{i}}S(t_{i}-s)\kappa(s)ds\\
  &+S(t-t_{k})\sum_{i=2}^{k}\prod_{j=k}^{i}(\mathcal{I}+B_{j}) S(t_{j}-t_{j-1}) D_{i-1}D^{*}_{i-1}\prod_{k=i}^{p}S^{*}(t_{k} -t_{k-1})(I +B^{*}_k)S^{*}(b-t_k)\tilde{\varphi}_{\alpha}\\
  &+S(t-t_{k})D_{k}D^{*}_{k}S^{*}(b-t_{k})\tilde{\varphi}_{\alpha}+\int_{t_{k}}^{t} S(t-s)\kappa(s)ds\\
  &+\int_{t_{k}}^{t} S(t-s)B\bigg( \sum_{k=1}^{p}\Omega^{*}S^{*}(t_{k}-s)\prod_{i=k+1}^{p}S^{*}(t_{i}-t_{i-1})S^{*}(b-t_{p})\chi_{(t_{k-1},t_{k})}+\Omega^{*}S^{*}(b-s)\chi_{(t_{p},b)}\bigg)\tilde{\varphi}_{\alpha} ds\\
  &=S(t-t_{k})\prod_{j=k}^{1}(\mathcal{I}+B_{j})S(t_{j}-t_{j-1})\xi_{0}+S(t-t_{k})\sum_{i=1}^{k}\prod_{j=k}^{i+1}(\mathcal{I}+B_{j})S(t_{j}-t_{j-1})
	(\mathcal{I}+B_{i})\int_{t_{i-1}}^{t_{i}}S(t_{i}-s)\kappa(s)ds\\
 &+\int_{t_{k}}^{t} S(t-s)\kappa(s)ds+\Big(\Theta^{t_{p}}_{0}+\Gamma^{b}_{t_{p}}+\tilde{\Theta}^{t_{p}}_{0}+\tilde{\Gamma}^{b}_{t_{p}}\Big)S^{*}(b-t)\Big(\alpha\mathcal{I}+\Theta^{t_{p}}_{0}+\Gamma^{b}_{t_{p}}+\tilde{\Theta}^{t_{p}}_{0}+\tilde{\Gamma}^{b}_{t_{p}}\Big)^{-1}p,
  \end{align*}
  where
\begin{align*}
p=&h-S(b-t_{p})\prod_{j=p}^{1}(\mathcal{I}+B_{j})S(t_{j}-t_{j-1})\xi_{0}\\
    -&S(b-t_{p})\sum_{i=1}^{p}\prod_{j=p}^{i+1}(\mathcal{I}+B_{j})S(t_{j}-t_{j-1})
	(\mathcal{I}+B_{i})\int_{t_{i-1}}^{t_{i}}S(t_{i}-s)\kappa(s,\xi(s))ds\\
 -&\int_{t_{p}}^{b} S(b-s)\kappa(s,\xi(s))ds.
\end{align*}
By substituting \( t = b \) into the latter equation and solving for \( z(b; \xi_0) - h \), we  obtain at equation \eqref{38}.

\bigskip

 \begin{align*}
    z(b,\xi_{0})
 &=S(b-t_{k})\prod_{j=k}^{1}(\mathcal{I}+B_{j})S(t_{j}-t_{j-1})\xi_{0}\\
 &+S(b-t_{k})\sum_{i=1}^{k}\prod_{j=k}^{i+1}(\mathcal{I}+B_{j})S(t_{j}-t_{j-1})
	(\mathcal{I}+B_{i})\int_{t_{i-1}}^{t_{i}}S(t_{i}-s)\kappa(s)ds\\
 &+\int_{t_{k}}^{t} S(b-s)\kappa(s)ds+\Big(\Theta^{t_{p}}_{0}+\Gamma^{b}_{t_{p}}+\tilde{\Theta}^{t_{p}}_{0}+\tilde{\Gamma}^{b}_{t_{p}}\Big)\Big(\alpha\mathcal{I}+\Theta^{t_{p}}_{0}+\Gamma^{b}_{t_{p}}+\tilde{\Theta}^{t_{p}}_{0}+\tilde{\Gamma}^{b}_{t_{p}}\Big)^{-1}p,
  \end{align*}
\begin{align*}
    z(b,\xi_{0}) -h
 &=S(b-t_{k})\prod_{j=k}^{1}(\mathcal{I}+B_{j})S(t_{j}-t_{j-1})\xi_{0}\\
 &+S(b-t_{k})\sum_{i=1}^{k}\prod_{j=k}^{i+1}(\mathcal{I}+B_{j})S(t_{j}-t_{j-1})
	(\mathcal{I}+B_{i})\int_{t_{i-1}}^{t_{i}}S(t_{i}-s)\kappa(s)ds\\
 &+\int_{t_{k}}^{t} S(b-s)\kappa(s)ds-h+p-\alpha\Big(\alpha\mathcal{I}+\Theta^{t_{p}}_{0}+\Gamma^{b}_{t_{p}}+\tilde{\Theta}^{t_{p}}_{0}+\tilde{\Gamma}^{b}_{t_{p}}\Big)^{-1}p\\
 &=-\alpha\Big(\alpha\mathcal{I}+\Theta^{t_{p}}_{0}+\Gamma^{b}_{t_{p}}+\tilde{\Theta}^{t_{p}}_{0}+\tilde{\Gamma}^{b}_{t_{p}}\Big)^{-1}p.
  \end{align*}
   \end{proof}
\begin{theorem}\label{u9}
Let the linear system \eqref{eq18} be $\mathcal{A}$-controllable on \( \mathscr{I} \). If the conditions \( A_0 \), \( A_1 \), and \( A_4 \) hold, then the semilinear impulsive system \eqref{eq1} is $\mathcal{A}$-controllable.
\end{theorem}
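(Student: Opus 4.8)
The plan is to run the standard resolvent-condition argument for approximate controllability, now carried through the impulsive setting by means of the preceding Lemma. First I would observe that $(A_4)$ is a special case of $(A_2)$–$(A_3)$: taking $q=1$, $\lambda_1\equiv 1\in L^1(\mathscr{I},\mathbb{R}^+)$ and $\varphi_1\equiv N_1$, the bound $\|\kappa(t,\xi)\|\le N_1$ gives $(A_2)$, while $(A_3)$ holds trivially since $r-\tfrac{c_1}{\alpha}N_1\to\infty$. Hence the existence theorem proved above applies and, for each fixed $\alpha\in(0,1)$, the operator $F_\alpha$ has a fixed point $\xi_\alpha\in PC(\mathscr{I},H)$. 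This $\xi_\alpha$ is exactly the mild solution of \eqref{eq1} driven by the distributed control $u_\alpha$ of \eqref{l1} and the impulsive controls $v_k^\alpha$ of \eqref{l111}, all constructed from the prescribed target $h$.

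Next I would freeze the nonlinearity. Since $\xi_\alpha$ is now a determined element of $PC(\mathscr{I},H)$, the map $s\mapsto\kappa(s,\xi_\alpha(s))$ lies in $L^1(\mathscr{I},H)$, so the preceding Lemma applies verbatim with $\kappa(s):=\kappa(s,\xi_\alpha(s))$. Writing $\Lambda:=\Theta^{t_{p}}_{0}+\Gamma^{b}_{t_{p}}+\tilde{\Theta}^{t_{p}}_{0}+\tilde{\Gamma}^{b}_{t_{p}}$ and letting $p_\alpha$ denote the vector $p$ of that Lemma evaluated at $\xi_\alpha$, identity \eqref{38} yields
\[
\xi_\alpha(b)-h=-\alpha\big(\alpha\mathcal{I}+\Lambda\big)^{-1}p_\alpha .
\]
Because each of $\Gamma^{b}_{t_{p}},\tilde{\Gamma}^{b}_{t_{p}},\Theta^{t_{p}}_{0},\tilde{\Theta}^{t_{p}}_{0}$ has the form $TT^{*}$, the operator $\Lambda$ is non-negative and self-adjoint, so by the spectral calculus $\|\alpha(\alpha\mathcal{I}+\Lambda)^{-1}\|\le 1$ for every $\alpha>0$.

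The crux is to tame the $\alpha$-dependence of $p_\alpha$. By $(A_4)$ we have $\|\kappa(s,\xi_\alpha(s))\|\le N_1$ uniformly in $\alpha$ and $s$, so the integral terms entering $p_\alpha$ are uniformly bounded; moreover, since $(A_1)$ makes $S(t)$ compact, the maps $\xi\mapsto\int_{t_{i-1}}^{t_i}S(t_i-s)\kappa(s,\xi(s))\,ds$ and $\xi\mapsto\int_{t_p}^{b}S(b-s)\kappa(s,\xi(s))\,ds$ carry bounded sets to relatively compact sets. Consequently the family $\{p_\alpha:0<\alpha<1\}$ is relatively compact in $H$, and I can extract a sequence $\alpha_n\downarrow 0$ with $p_{\alpha_n}\to p$ for some $p\in H$. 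Splitting
\[
\xi_{\alpha_n}(b)-h=-\alpha_n(\alpha_n\mathcal{I}+\Lambda)^{-1}(p_{\alpha_n}-p)-\alpha_n(\alpha_n\mathcal{I}+\Lambda)^{-1}p,
\]
the first summand is bounded by $\|p_{\alpha_n}-p\|\to 0$ (using $\|\alpha(\alpha\mathcal{I}+\Lambda)^{-1}\|\le 1$), and the second tends to $0$ by Assumption $(A_0)$. Hence $\xi_{\alpha_n}(b)\to h$. As $\xi_{\alpha_n}(b)\in\mathcal{R}(b,\xi_0)$ and $h\in H$ was arbitrary, this gives $\overline{\mathcal{R}(b,\xi_0)}=H$, i.e. \eqref{eq1} is $\mathcal{A}$-controllable.

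The main obstacle, and the step that uses $(A_1)$ in an essential way, is the relative compactness of $\{p_\alpha\}$: without it one cannot replace the $\alpha$-varying vector $p_\alpha$ by a fixed limit $p$ on which $(A_0)$ can be invoked, and the uniform estimate $\|\alpha(\alpha\mathcal{I}+\Lambda)^{-1}\|\le 1$ by itself does not force the product to vanish as $\alpha\to 0^{+}$. I would therefore isolate this compactness claim and verify it carefully before assembling the splitting estimate.
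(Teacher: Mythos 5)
Your proposal is correct, and its skeleton matches the paper's: reduce $(A_4)$ to $(A_2)$--$(A_3)$, invoke the existence theorem to get a fixed point $\xi_\alpha$ of $F_\alpha$, freeze the nonlinearity so that Lemma 3.1 gives $\xi_\alpha(b)-h=-\alpha(\alpha\mathcal{I}+\Lambda)^{-1}p_\alpha$, and finish with the splitting estimate that applies $(A_0)$ to a fixed limit vector while controlling the remainder by $\|\alpha(\alpha\mathcal{I}+\Lambda)^{-1}\|\le 1$. Where you genuinely diverge is in how the $\alpha$-dependence of $p_\alpha$ is tamed. The paper observes that $(A_4)$ makes $\{\kappa(\cdot,\xi^*_\alpha(\cdot))\}$ bounded in $L^2(\mathscr{I},H)$, extracts a \emph{weakly} convergent subsequence there, and then upgrades to strong convergence of the vectors $h_\alpha$ by citing the compactness of the integral (convolution-type) operators induced by the compact semigroup (Corollary 3.3 of reference [8]); the limit vector $\bar h$ is thus identified through a weak limit $\kappa$ of the nonlinear terms. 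You instead prove norm relative compactness of the set $\{p_\alpha:0<\alpha<1\}$ directly in $H$, via the standard tail-splitting argument ($S(\epsilon)$ applied to a bounded set plus a uniformly small tail, using the uniform bound $N_1$), and extract a strongly convergent subsequence without ever discussing weak limits. Both mechanisms rest on exactly the same two ingredients --- compactness of $S(t)$ from $(A_1)$ and uniform boundedness from $(A_4)$ --- so neither is more general here, but your route is more self-contained (no weak-compactness machinery, no external corollary) at the cost of not identifying the limit as data of a frozen linear problem; the paper's route is the standard Mahmudov-style argument and transfers more readily to settings where only an $L^2$ bound, rather than an $L^\infty$ bound, on the nonlinearity is available. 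Your remark that the uniform resolvent bound alone cannot close the argument, and that the compactness claim is the step needing careful verification, is exactly right and is the same structural point the paper's proof turns on.
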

\begin{proof}
    It is clear that the conditions $A_{2}$ and $A_{3}$ can be derived from $A_{0}$. Let \(\xi^*_\alpha(\cdot)\) represent a fixed point of \(F_\alpha\) within \(B_r{(\alpha)}\). Consequently, \(\xi^*_\alpha(\cdot)\) serves as a mild solution to  \eqref{eq1} over the interval \([0, b]\), subject to the control

     \begin{align}\label{177}
    u^{*}_{\alpha}(s)&=\bigg( \sum_{k=1}^{p}\Omega^{*}S^{*}(t_{k}-s)\prod_{i=k+1}^{p}S^{*}(t_{i}-t_{i-1})S^{*}(b-t_{p})\chi_{(t_{k-1},t_{k})}+\Omega^{*}S^{*}(b-s)\chi_{(t_{p},b)}\bigg)\tilde{\varphi}^{*}_{\alpha},
\end{align}
where
\begin{align*}
    \tilde{\varphi}^{*}_{\alpha}=&\Big(\alpha\mathcal{I}+\Theta^{t_{p}}_{0}+\Gamma^{b}_{t_{p}}+\tilde{\Theta}^{t_{p}}_{0}+\tilde{\Gamma}^{b}_{t_{p}}\Big)^{-1}\bigg(h-S(b-t_{p})\prod_{j=p}^{1}(\mathcal{I}+B_{j})S(t_{j}-t_{j-1})\xi_{0}\\
    -&S(b-t_{p})\sum_{i=1}^{p}\prod_{j=p}^{i+1}(\mathcal{I}+B_{j})S(t_{j}-t_{j-1})
	(\mathcal{I}+B_{i})\int_{t_{i-1}}^{t_{i}}S(t_{i}-s)\kappa(s,\xi^{*}_{\alpha}(s))ds\\
 -&\int_{t_{p}}^{b} S(b-s)\kappa(s,\xi^{*}_{\alpha}(s))ds\bigg)
\end{align*}
and holds the following equality:
\begin{align*}
    \xi^{*}_{\alpha}(b)=h-\alpha\Big(\alpha\mathcal{I}+\Theta^{t_{p}}_{0}+\Gamma^{b}_{t_{p}}+\tilde{\Theta}^{t_{p}}_{0}+\tilde{\Gamma}^{b}_{t_{p}}\Big)^{-1}p(\xi^{*}_{\alpha}(\cdot)).
\end{align*}
To put it differently, by Lemma 3.1 $\xi_{\alpha}= \xi^{*}_{\alpha}(b)-h$  is a solution of the equation
\begin{align*}
\alpha \xi_{\alpha}+\big(\Theta^{t_{p}}_{0}+\Gamma^{b}_{t_{p}}+\tilde{\Theta}^{t_{p}}_{0}+\tilde{\Gamma}^{b}_{t_{p}}\big)\xi_{\alpha}=\alpha h_{\alpha}
\end{align*}
 with
 \begin{align*}
      h_{\alpha}=&-p(\xi^{*}_{\alpha}(\cdot))=S(b-t_{p})\prod_{j=p}^{1}(\mathcal{I}+B_{j})S(t_{j}-t_{j-1})\xi_{0}\\
    +&S(b-t_{p})\sum_{i=1}^{p}\prod_{j=p}^{i+1}(\mathcal{I}+B_{j})S(t_{j}-t_{j-1})
	(\mathcal{I}+B_{i})\int_{t_{i-1}}^{t_{i}}S(t_{i}-s)\kappa(s,\xi^{*}_{\alpha}(s))ds\\
 +&\int_{t_{p}}^{b} S(b-s)\kappa(s,\xi^{*}_{\alpha}(s))ds-h.
 \end{align*}
  By  $A_{4}$
  \begin{align*}
      \int_{0}^{b}\Vert \kappa(s,\xi^{*}_{\alpha}(s)\Vert S^{2} ds\leq N_{1}^{2}b,
  \end{align*}

As a result, the sequence \(\{\kappa(\cdot, \xi^*_{\alpha}(\cdot))\}\) is bounded and contained within \(L^2(\mathscr{I}, H)\). Therefore, there is a subsequence, which we continue to denote by \(\{\kappa(\cdot, \xi^*_{\alpha}(\cdot))\}\), that weakly converges to \(\kappa(\cdot)\) in \(L^2(\mathscr{I}, H)\). Subsequently, applying Corollary 3.3 from \cite{8}, we derive the following result:

  \begin{align*}
      \Vert h_{\alpha}-\bar{h}\Vert
 \leq &\sup_{0\leq t\leq b}\bigg\Vert S(t-t_{p})\sum_{i=1}^{p}\prod_{j=p}^{i+1}(\mathcal{I}+B_{j})S(t_{j}-t_{j-1})
	(\mathcal{I}+B_{i})\int_{t_{i-1}}^{t_{i}}S(t_{i}-s)(\kappa(s,\xi^{*}_{\alpha}(s))-\kappa(s))ds\\
 +&\int_{t_{p}}^{t} S(t-s)(\kappa(s,\xi^{*}_{\alpha}(s))-\kappa(s))ds\bigg\Vert\to 0,
  \end{align*}
  where
  \begin{align*}
      \bar{h}&=S(b-t_{p})\prod_{j=p}^{1}(\mathcal{I}+B_{j})S(t_{j}-t_{j-1})\xi_{0}\\
    +&S(b-t_{p})\sum_{i=1}^{p}\prod_{j=p}^{i+1}(\mathcal{I}+B_{j})S(t_{j}-t_{j-1})
	(\mathcal{I}+B_{i})\int_{t_{i-1}}^{t_{i}}S(t_{i}-s)\kappa(s)ds\\
 +&\int_{t_{p}}^{b} S(b-s)\kappa(s)ds-h
  \end{align*}
  Then from
  \begin{align*}
      \Vert \xi^{*}_{\alpha}(b)-h\Vert\leq& \Big\Vert \alpha\Big(\alpha\mathcal{I}+\Theta^{t_{p}}_{0}+\Gamma^{b}_{t_{p}}+\tilde{\Theta}^{t_{p}}_{0}+\tilde{\Gamma}^{b}_{t_{p}}\Big)^{-1}\bar{h}\Big\Vert\\
      +&\Big\Vert \alpha\Big(\alpha\mathcal{I}+\Theta^{t_{p}}_{0}+\Gamma^{b}_{t_{p}}+\tilde{\Theta}^{t_{p}}_{0}+\tilde{\Gamma}^{b}_{t_{p}}\Big)^{-1}\Big\Vert \Vert p(\xi^{*}_{\alpha}(\cdot))-\bar{h}\Vert\\
      \leq& \Big\Vert \alpha\Big(\alpha\mathcal{I}+\Theta^{t_{p}}_{0}+\Gamma^{b}_{t_{p}}+\tilde{\Theta}^{t_{p}}_{0}+\tilde{\Gamma}^{b}_{t_{p}}\Big)^{-1}\bar{h}\Big\Vert+ \Vert p(\xi^{*}_{\alpha}(\cdot))-\bar{h}\Vert\to 0,
  \end{align*}
  as $\alpha\to 0^{+}$. This  establishes the $\mathcal{A}$-controllability of \eqref{eq1}.
\end{proof}

\section{$\mathcal{A}$-controllability of neutral IDEs}

Impulsive neutral functional differential equations naturally extend  ordinary IDEs by incorporating both delayed effects and sudden disruptions. These equations effectively represent real-world models where the dynamics depend on historical states as well as on instantaneous disturbances. Impulsive neutral systems have seen a sharp rise in interest recently, mostly due to its useful applications in real-world industries like as chemical science, bioengineering, circuit theory, and other areas.
\bigskip

This paragraph will represent that the system \eqref{eq18} is $\mathcal{A}$-controllable if a  function \( \xi(\cdot) \in PC([-h, b], H) \) exists for any \( \alpha > 0 \) that satisfies the following requirements:

\begin{align}\label{l193}
    u_{\alpha}(s)&=\bigg( \sum_{k=1}^{p}\Omega^{*}S^{*}(t_{k}-s)\prod_{i=k+1}^{p}S^{*}(t_{i}-t_{i-1})S^{*}(b-t_{p})\chi_{(t_{k-1},t_{k})}+\Omega^{*}S^{*}(b-s)\chi_{(t_{p},b)}\bigg)\tilde{\Psi}_{\alpha},
\end{align}
\begin{align}\label{l96}
    v^{\alpha}_{p}=D^{*}_{p}S^{*}(b-t_{p})\tilde{\Psi}_{\alpha},\quad  v^{\alpha}_{k}=D^{*}_{k}\prod_{i=k}^{p}S^{*}(t_{i} -t_{i-1})(I +B^{*}_i)S^{*}(b-t_p)\tilde{\Psi}_{\alpha},\quad k=1,\dots, p-1,
\end{align}
\begin{align*}
\tilde{\Psi}_{\alpha}(\xi(\cdot))=&\Big(\alpha\mathcal{I}+\Theta^{t_{p}}_{0}+\Gamma^{b}_{t_{p}}+\tilde{\Theta}^{t_{p}}_{0}+\tilde{\Gamma}^{b}_{t_{p}}\Big)^{-1}\bigg(h-S(b-t_{p})\prod_{j=p}^{1}(\mathcal{I}+B_{j})S(t_{j}-t_{j-1})[\varphi(0)+\sigma(0,\varphi)]\\
    +&\sigma(b,\xi_{b})-S(b-t_{p})\sum_{i=1}^{p}\prod_{j=p}^{i+1}(\mathcal{I}+B_{j})S(t_{j}-t_{j-1})
	(\mathcal{I}+B_{i})\int_{t_{i-1}}^{t_{i}}S(t_{i}-s)\kappa(s,\xi(s))ds\\
 -&\int_{t_{p}}^{b} S(b-s)\kappa(s,\xi(s))ds\bigg),
\end{align*}
\begin{align}\label{eq213}
	\xi(t)=
		\begin{cases}
			S(t)[\varphi(0)+\sigma(0,\varphi)]-\sigma(b,\xi_{b})+\int_{0}^{t} S(t-s)\big[\Omega u(s)+\kappa(s,\xi(s))\big]ds,\quad  0\leq t\leq t_{1},\\
			\\
		S(t-t_{k})\xi(t^{+}_{k})+\int_{t_{k}}^{t} S(t-s)\big[\Omega u(s)+\kappa(s,\xi(s))\big]ds,\quad t_{k}<t\leq t_{k+1},\quad k=1,2,\dots, p,\\
		\end{cases}
	\end{align}
where
\begin{align}\label{eq36}
	\xi(t^{+}_{k})=&\prod_{j=k}^{1}(\mathcal{I}+B_{j})S(t_{j}-t_{j-1})[\varphi(0)+\sigma(0,\varphi)]-\sigma(b,\xi_{b})\nonumber\\
	+&\sum_{i=1}^{k}\prod_{j=k}^{i+1}(\mathcal{I}+B_{j})S(t_{j}-t_{j-1})
	(\mathcal{I}+B_{i})\int_{t_{i-1}}^{t_{i}}S(t_{i}-s)\Omega u(s)ds\nonumber\\
 +&\sum_{i=1}^{k}\prod_{j=k}^{i+1}(\mathcal{I}+B_{j})S(t_{j}-t_{j-1})
	(\mathcal{I}+B_{i})\int_{t_{i-1}}^{t_{i}}S(t_{i}-s)\kappa(s,\xi(s))ds\\
	+&\sum_{i=2}^{k}\prod_{j=k}^{i}(\mathcal{I}+B_{j}) S(t_{j}-t_{j-1}) D_{i-1}v_{i-1}+D_{k}v_{k}.\nonumber
\end{align}
\bigskip

where, we consider a sequence of intervals \(0 = t_0 < t_1 < \dots < t_p < t_{p+1} = b\), such that \(\xi(t_k)\) and \(\xi(t_k^+)\) denote the left and right limits of \(\xi(t)\) at \(t = t_k\), respectively. Let \(C = C([-h, 0], H)\) represent the set of continuous functions \(\varphi : [-\tau, 0] \to H\), equipped with the norm \(\|\varphi\| = \sup\{|\varphi(0)| : -\tau \leq 0 \leq 0\}\). For each piecewise continuous function \(y\) defined on the interval \([-\tau, b] \setminus \{t_1, \ldots, t_p\}\), and for \(t \in \mathscr{I}\), we achive \(\xi_t \in PC\) for \(t \in [0, b]\), with \(\xi_S(\theta) = \xi(t + \theta)\) for \(\theta \in [-\tau, 0]\). For any \(k = 0, 1, \dots, p\), let \(\mathscr{I}_k = [t_k, t_{k+1}]\). The space \(PC(\mathscr{I}_k, H)\) consists of all continuous functions from \(\mathscr{I}_k\) to \(H\), with the norm \(\|\xi\|_{\mathscr{I}_k} = \sup\{|\xi(t)| : t \in \mathscr{I}_k\}\).
\bigskip

Next, define the space \(PC([-\tau, b], H) = \{\xi : [-\tau, b] \to H : \xi_k \in PC(\mathscr{I}_k, H), \, k = 0, \dots, p \) and there are  $\xi(t^+)$  and $\xi(t_k^-)$ such that \(\xi(t_k) = \xi(t_k^+), k = 1, \dots, p\}\), which is a Banach space with the norm \(\|\xi\|_{PC([-\tau, b], H)} = \sup\{\|\xi_k\|_{\mathscr{I}_k} : k = 0, \dots, p\}\), where \(\xi_k\) is the restriction of \(y\) to \(\mathscr{I}_k\) for \(k = 0, \dots, p\).
\bigskip

Concering \( \kappa \) and \( \sigma \), we assume the following hypotheses:
\bigskip

$(i)$ 	There are  $\lambda_{m}(\cdot)\in L^{1}(\mathscr{I},R^{+})$ and $\psi_{m}(\cdot)\in L^{1}(PC,R^{+}), m=1,\dots, q$, for which
	\bigskip

 \begin{align*}
     \Vert \kappa(t,\varphi)\Vert\leq \sum_{m=1}^{q}\lambda_{m}(t)\psi_{m}(\xi)\quad \forall(t,\varphi)\in \mathscr{I}\times H.
 \end{align*}

\bigskip

$(ii)$ For each \(\alpha> 0\),
\[
\limsup_{r \to \infty} \left(r - \sum_{m=1}^{q} \frac{\bar{c}_{m}}{\alpha}\sup \left\{ \psi_{m}(\varphi) : \|\varphi\| \leq r \right\} \right)=\infty.
\]

\bigskip

$(iii)$ The function \( \kappa : \mathscr{I} \times C \to H \)  uniformly bounded  and continuous, meaning that there is a \( N_2 > 0 \) for which

\[
\| \kappa(t,\varphi) \| \leq N_2 \quad \forall \ (t,\varphi) \in \mathscr{I} \times C.
\]

\bigskip
$(iv)$ The function \( \sigma : \mathscr{I} \times C \to H \) is uniformly bounded  and continuous, meaning that there is a \( N_3 > 0 \) for which

\[
\| \sigma(t,\varphi) \| \leq N_3 \quad \forall \ (t,\varphi) \in \mathscr{I} \times C.
\]
\bigskip

\begin{theorem}\label{k3}

Suppose that the linear IDE \eqref{eq18} is $\mathcal{A}$-controllable on the interval \([0, b]\). If the semigroup \( S(t) \) is compact and the conditions $(i)-(iv)$  are held, then the system given by equation \eqref{eq188} will also be $\mathcal{A}$-controllable.
\end{theorem}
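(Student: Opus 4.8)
The plan is to follow the same two-stage argument used to prove Theorem~\ref{u9}, adapted to the neutral term $\sigma$. For each fixed $\alpha>0$ I would first produce a mild solution of \eqref{eq188} as a fixed point of the operator $G_\alpha$ on $PC([-\tau,b],H)$ defined through the representation \eqref{eq213}--\eqref{eq36}, with the controls $u_\alpha$, $v^\alpha_k$ given by \eqref{l193}--\eqref{l96} and the vector $\tilde{\Psi}_\alpha$; then I would let $\alpha\to 0^{+}$ and use the resolvent hypothesis $(A_0)$ to show the terminal values $\xi^{*}_{\alpha}(b)$ converge to the prescribed target $h$, which is exactly $\mathcal{A}$-controllability of \eqref{eq188}.

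For the existence step I would reproduce Step~1 and Step~2 of the proof of Theorem~3.1 almost verbatim, except for the two additional terms $\sigma(0,\varphi)$ and $-\sigma(b,\xi_b)$ that now appear in \eqref{eq213}. Using the growth bound in hypothesis $(i)$ together with $(iv)$ (the uniform bound $\|\sigma\|\le N_3$), I would choose, via $(ii)$, a radius $r(\alpha)>0$ for which $G_\alpha$ maps the ball $B_{r(\alpha)}=\{\xi\in PC([-\tau,b],H):\|\xi\|\le r(\alpha)\}$ into itself; the $\sigma$-contributions only enlarge the constant term $d$ by a fixed multiple of $N_3$, so the same absorption inequality goes through. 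Relative compactness of $G_\alpha(B_{r(\alpha)})$ would then be obtained from the extended Ascoli--Arzel\`a theorem (Theorem~\ref{Ascoli-Arsela}): uniform boundedness is immediate, equicontinuity on each subinterval $(t_k,t_{k+1}]$ follows from the compactness of $S(\cdot)$ through $\|S(a_2-s)-S(a_1-s)\|\to 0$ exactly as in \eqref{1}--\eqref{2}, and relative compactness at the jump points $t_k\pm 0$ uses $\mathcal{V}(t_k+0)=(I+B_k)\mathcal{V}(t_k)+D_kv_k$. Continuity of $G_\alpha$ and the Schauder fixed-point theorem then yield a fixed point $\xi^{*}_{\alpha}$.

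In the limiting step I would invoke (the neutral analogue of) Lemma~3.1 to write
\[
\xi^{*}_{\alpha}(b)=h-\alpha\Big(\alpha\mathcal{I}+\Theta^{t_{p}}_{0}+\Gamma^{b}_{t_{p}}+\tilde{\Theta}^{t_{p}}_{0}+\tilde{\Gamma}^{b}_{t_{p}}\Big)^{-1}p(\xi^{*}_{\alpha}(\cdot)).
\]
By $(iii)$ and $(iv)$ the families $\{\kappa(\cdot,\xi^{*}_{\alpha}(\cdot))\}$ and $\{\sigma(\cdot,\xi^{*}_{\alpha}(\cdot))\}$ are bounded in $L^{2}(\mathscr{I},H)$, so along a subsequence they converge weakly; compactness of $S(t)$ (via Corollary~3.3 of \cite{8}) upgrades the corresponding integral and boundary terms of $p(\xi^{*}_{\alpha}(\cdot))$ to strong convergence, giving $p(\xi^{*}_{\alpha}(\cdot))\to\bar p$. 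Splitting $\|\xi^{*}_{\alpha}(b)-h\|$ as in the proof of Theorem~\ref{u9} into a term $\|\alpha(\alpha\mathcal{I}+\cdots)^{-1}\bar p\|$, which tends to $0$ by the strong-operator-convergence hypothesis $(A_0)$, and a remainder bounded by $\|p(\xi^{*}_{\alpha}(\cdot))-\bar p\|\to 0$, I would conclude $\xi^{*}_{\alpha}(b)\to h$ as $\alpha\to 0^{+}$.

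The main obstacle I anticipate is the neutral term itself. Because $-\sigma(b,\xi_b)$ enters \eqref{eq213} as a shift that is \emph{not} acted on by the smoothing compact semigroup $S(\cdot)$, the relative-compactness check in the Ascoli--Arzel\`a step (the set $\mathcal{V}(t)$) is genuinely more delicate than in the non-neutral case of Theorem~3.1: the compactness of $S(t)$ alone does not regularize this constant, so one must either strengthen $(iv)$ so that $\sigma$ maps bounded sets into relatively compact subsets of $H$ (or takes values in $D(A)$, transferring the smoothing onto $A\sigma$), or argue directly that over $B_{r(\alpha)}$ the set $\{\sigma(b,\xi_b)\}$ is itself precompact. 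The weak-to-strong passage for the $\sigma$-terms of $p(\xi^{*}_{\alpha}(\cdot))$ during the limit $\alpha\to 0^{+}$ requires care for the same reason.
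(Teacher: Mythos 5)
Your proposal follows essentially the same route as the paper: the paper's own proof of Theorem \ref{k3} is only a sketch, which defines the operator $F_\alpha$ through \eqref{eq213}--\eqref{eq36} with the controls \eqref{l193}--\eqref{l96} and then simply asserts that the fixed-point procedure of Section 3 and the limiting argument of Theorem \ref{u9} carry over; your two-stage outline is precisely that procedure made explicit.

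The more important point is that the obstacle you flag in your last paragraph is genuine, and the paper never addresses it. In \eqref{eq213} the neutral term $-\sigma(b,\xi_b)$ enters the mild solution outside the action of the semigroup, so compactness of $S(t)$ does nothing to regularize it: as $\xi$ ranges over $B_{r(\alpha)}$, hypothesis $(iv)$ (uniform boundedness and continuity of $\sigma$) only makes the set $\{\sigma(b,\xi_b):\xi\in B_{r(\alpha)}\}$ bounded, and in an infinite-dimensional $H$ a bounded set need not be relatively compact. Consequently the sets $\mathcal{V}(t)$ in the Ascoli--Arzel\`a step (Theorem \ref{Ascoli-Arsela}) need not be relatively compact, and the Schauder argument, transplanted verbatim as the paper suggests, does not close. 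One of the remedies you propose is the standard one in the neutral-equation literature: assume $\sigma$ is completely continuous (maps bounded sets into relatively compact sets), or that it takes values in the domain of a suitable fractional power of $-A$ with the corresponding boundedness, so that the semigroup's smoothing can be brought to bear; the same strengthening is needed for the weak-to-strong passage of the $\sigma$ terms of $p(\xi^{*}_{\alpha}(\cdot))$ in the $\alpha\to 0^{+}$ limit. In short, your proposal is aligned with the paper's intended proof and is in fact more careful than it: the published argument silently inherits exactly the gap you identified.
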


\begin{proof}
For \(\alpha > 0\), we assign the operator \(F_\alpha\) on \(PC([-\tau,b], H)\) as
\[
F_\alpha(\xi) = z,
\]
such that
\begin{align}
	z(t)=
		\begin{cases}
			S(t)[\varphi(0)+\sigma(0,\varphi)]-\sigma(b,\xi_{b})+\int_{0}^{t} S(t-s)\big[\Omega u_{\alpha}(s)+\kappa(s,\xi(s))\big]ds,\quad  0\leq t\leq t_{1},\\
			\\
		S(t-t_{k})\xi(t^{+}_{k})+\int_{t_{k}}^{t} S(t-s)\big[\Omega u_{\alpha}(s)+\kappa(s,\xi(s))\big]ds,\quad t_{k}<t\leq t_{k+1},\quad k=1,2,\dots, p,\\
		\end{cases}
	\end{align}
 \begin{align}
     v_{0}(\theta)=\varphi(\theta),\quad \theta\in [-\tau,0],
 \end{align}
where
\begin{align}
	\xi(t^{+}_{k})=&\prod_{j=k}^{1}(\mathcal{I}+B_{j})S(t_{j}-t_{j-1})[\varphi(0)+\sigma(0,\varphi)]-\sigma(b,\xi_{b})\nonumber\\
	+&\sum_{i=1}^{k}\prod_{j=k}^{i+1}(\mathcal{I}+B_{j})S(t_{j}-t_{j-1})
	(\mathcal{I}+B_{i})\int_{t_{i-1}}^{t_{i}}S(t_{i}-s)\Omega u_{\alpha}(s)ds\nonumber\\
 +&\sum_{i=1}^{k}\prod_{j=k}^{i+1}(\mathcal{I}+B_{j})S(t_{j}-t_{j-1})
	(\mathcal{I}+B_{i})\int_{t_{i-1}}^{t_{i}}S(t_{i}-s)\kappa(s,\xi(s))ds\\
	+&\sum_{i=2}^{k}\prod_{j=k}^{i}(\mathcal{I}+B_{j}) S(t_{j}-t_{j-1}) D_{i-1}v_{i-1}+D_{k}v_{k},\nonumber
\end{align}
\begin{align}
    u_{\alpha}(s)&=\bigg( \sum_{k=1}^{p}\Omega^{*}S^{*}(t_{k}-s)\prod_{i=k+1}^{p}S^{*}(t_{i}-t_{i-1})S^{*}(b-t_{p})\chi_{(t_{k-1},t_{k})}+\Omega^{*}S^{*}(b-s)\chi_{(t_{p},b)}\bigg)\tilde{\Psi}_{\alpha},
\end{align}
\begin{align*}
\tilde{\Psi}_{\alpha}(\xi(\cdot))=&\Big(\alpha\mathcal{I}+\Theta^{t_{p}}_{0}+\Gamma^{b}_{t_{p}}+\tilde{\Theta}^{t_{p}}_{0}+\tilde{\Gamma}^{b}_{t_{p}}\Big)^{-1}\bigg(h-S(b-t_{p})\prod_{j=p}^{1}(\mathcal{I}+B_{j})S(t_{j}-t_{j-1})[\varphi(0)+\sigma(0,\varphi)]\\
    +&\sigma(b,\xi_{b})-S(b-t_{p})\sum_{i=1}^{p}\prod_{j=p}^{i+1}(\mathcal{I}+B_{j})S(t_{j}-t_{j-1})
	(\mathcal{I}+B_{i})\int_{t_{i-1}}^{t_{i}}S(t_{i}-s)\kappa(s,\xi(s))ds\\
 -&\int_{t_{p}}^{b} S(b-s)\kappa(s,\xi(s))ds\bigg).
\end{align*}
\bigskip

    It is not difficult to indicate that if  \(F\) admits a fixed point for all \(\alpha > 0\) using the procedure from the prior section, then one can conclude that system \eqref{eq18} is $\mathcal{A}$-controllable by applying the approach found in Theorem \ref{u9}.
\end{proof}

Future research on the $\mathcal{A}$-controllability of neutral IDEs could focus on the following areas:

\textbf{Fractional Neutral Systems:} Extending controllability results to impulsive neutral systems with fractional derivatives to address processes with memory and hereditary properties.

\textbf{Variable-Order Dynamics:} Exploring impulsive neutral systems with variable-order derivatives to model complex, time-dependent dynamics more accurately.

\textbf{Stochastic Influences:} Investigating the controllability of impulsive neutral systems under stochastically perturbed  uncertainties.

\textbf{Nonlinear and Multi-Valued Maps:} Studying systems with nonlinear or multi-valued operators to address challenges in fields like material science, population dynamics, and control engineering.

\textbf{Infinite-Dimensional Systems:} Analyzing impulsive neutral systems in infinite-dimensional spaces, such as those governed by PDEs or delay differential equations.

\textbf{Optimal Control Strategies:} Combining controllability analysis with optimization techniques to design efficient control strategies for resource-constrained systems.

\textbf{Applications with Non-Compactness Measures:} Focusing on systems where the measure of non-compactness is critical, providing deeper insights into approximate controllability in more complex settings.

\textbf{Hybrid and Switched Neutral Systems:} Examining the controllability of hybrid and switched impulsive neutral systems to reflect diverse operational modes and transitions.

\section{Applications}
\begin{theorem}
If  $b-t_{p}\geq 2\pi$, $\gamma_{m} \not=0 $ for $m = 1,2, \dots$ under the assumptions $A_{0}, A_{1}$ and $A_{4}$, then system \eqref{u10}
\begin{align}\label{u10}
    \begin{cases}
        \frac{\partial^{2}\eta(t,\theta)}{\partial S^{2}}=  \frac{\partial^{2}\eta(t,\theta)}{\partial\theta^{2}}+hu(t)+\kappa(t,\eta(t,\theta)),\\
        \eta(t,0)=\eta(t,\pi) \equiv  0,\\
        \eta(0,\theta)=a(\theta),\quad \frac{\partial \eta(0,\theta)}{\partial t}=b(\theta),\\
        \Delta \eta(t_{i},\theta)=a_{i}(\theta),\quad  \Delta \frac{\partial \eta(t_{i},\theta)}{\partial t}=b_{i}(\theta),\quad i=1,\dots, p.
    \end{cases}
\end{align}
 is $\mathcal{A}$-controllable on $\mathscr{I}$.
\end{theorem}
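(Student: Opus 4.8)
The plan is to recast the impulsive wave system \eqref{u10} as an abstract problem of the form \eqref{eq1} and then invoke Theorem \ref{u9}; the only nonroutine ingredient is verifying that the \emph{linear} system \eqref{eq18} attached to \eqref{u10} is $\mathcal{A}$-controllable, which by the remark following assumption $(A_{0})$ reduces to checking $(A_{0})$ itself. First I would pass to a first-order formulation. Writing the state as the pair $\xi(t)=(\eta(t,\cdot),\partial_{t}\eta(t,\cdot))$ in the energy space $H=H_{0}^{1}(0,\pi)\times L^{2}(0,\pi)$, the wave operator $A(\phi,\psi)=(\psi,\phi'')$ with Dirichlet conditions generates the $C_{0}$-semigroup $S(t)$, while the control, initial, and jump data in \eqref{u10} fix the operators $\Omega$, the datum $\xi_{0}$, and the impulse operators $B_{k},D_{k}$ in \eqref{eq1}. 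The Dirichlet Laplacian on $(0,\pi)$ has the orthonormal eigenbasis $\phi_{m}(\theta)=\sqrt{2/\pi}\,\sin(m\theta)$ with eigenvalues $m^{2}$, so on the $m$-th mode $S(t)$ acts by the rotation
\begin{align*}
\begin{pmatrix}\cos mt & m^{-1}\sin mt\\ -m\sin mt & \cos mt\end{pmatrix},
\end{align*}
the scalar control enters through $\Omega u=(0,h\,u)$ with Fourier coefficients $\gamma_{m}=\langle h,\phi_{m}\rangle$, and $\kappa$ is uniformly bounded by $(A_{4})$.

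The main step is to establish $(A_{0})$, that is, that the nonnegative self-adjoint operator
\begin{align*}
\mathcal{W}=\Gamma^{b}_{t_{p}}+\tilde{\Gamma}^{b}_{t_{p}}+\Theta^{t_{p}}_{0}+\tilde{\Theta}^{t_{p}}_{0}
\end{align*}
satisfies $\alpha(\alpha\mathcal{I}+\mathcal{W})^{-1}\to 0$ strongly as $\alpha\to 0^{+}$. Since $\mathcal{W}\geq 0$ is self-adjoint, this is equivalent to $\ker\mathcal{W}=\{0\}$, i.e. to $\mathcal{W}$ having dense range. Because each summand is individually nonnegative, $\langle\mathcal{W}\xi,\xi\rangle=0$ forces in particular $\langle\Gamma^{b}_{t_{p}}\xi,\xi\rangle=\int_{t_{p}}^{b}\|\Omega^{*}S^{*}(b-s)\xi\|^{2}\,ds=0$, hence $\Omega^{*}S^{*}(b-s)\xi=0$ for a.e. $s\in(t_{p},b)$. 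Expanding $\xi$ in the basis $\{\phi_{m}\}$ and using the explicit rotation above, the second component of $S^{*}(b-s)\xi$ on the $m$-th mode is a nondegenerate combination $c_{m}\cos m(b-s)+d_{m}\sin m(b-s)$, where $(c_{m},d_{m})$ depends invertibly on the mode coefficients of $\xi$; pairing with $h$ inserts the factor $\gamma_{m}$. Here the hypotheses enter decisively: since $b-t_{p}\geq 2\pi$, the variable $s$ ranges over an interval of length at least one full period, on which $\{\cos m\tau,\sin m\tau\}_{m\geq 1}$ are orthogonal and hence linearly independent; together with $\gamma_{m}\neq 0$ for every $m$, the annihilation forces $c_{m}=d_{m}=0$, so every mode of $\xi$ vanishes and $\xi=0$. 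Therefore $\ker\mathcal{W}=\{0\}$, so $(A_{0})$ holds and, by Theorem 13 of \cite{1}, \eqref{eq18} is $\mathcal{A}$-controllable.

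With the linear system $\mathcal{A}$-controllable and $(A_{0}),(A_{1}),(A_{4})$ in force, all hypotheses of Theorem \ref{u9} are met, and that theorem delivers the $\mathcal{A}$-controllability of the semilinear impulsive wave system \eqref{u10} on $\mathscr{I}$. The main obstacle is precisely the Gramian-injectivity argument of the previous paragraph: one must carry out the modal expansion carefully and justify that an interval of length $\geq 2\pi$ after the last impulse is exactly what guarantees the linear independence of the mode functions, so that $\gamma_{m}\neq 0$ propagates to $\xi=0$. The impulse-related terms $\tilde{\Gamma}^{b}_{t_{p}},\Theta^{t_{p}}_{0},\tilde{\Theta}^{t_{p}}_{0}$ are nonnegative and only reinforce the lower bound $\mathcal{W}\geq\Gamma^{b}_{t_{p}}$, so they cannot destroy injectivity; this is why controlling all modes from the terminal segment $(t_{p},b)$ alone suffices.
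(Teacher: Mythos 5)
Your proposal is correct and follows essentially the same route as the paper: both reduce the problem to the observability statement that $\Omega^{*}S^{*}(b-s)\xi=0$ on the post-impulse interval $(t_{p},b)$ forces $\xi=0$, proved by modal expansion of the rotation group together with the length-$\geq 2\pi$ window and $\gamma_{m}\neq 0$, and then both conclude via Theorem \ref{u9}. The only difference is presentational: you make explicit the link between kernel-triviality of the Gramian sum and condition $(A_{0})$ (via $\alpha(\alpha\mathcal{I}+\mathcal{W})^{-1}\to P_{\ker\mathcal{W}}$), whereas the paper extracts the Fourier coefficients $m\gamma_{m}\alpha_{m}$ and $\gamma_{m}\beta_{m}$ of the observed function directly and leaves that link implicit.
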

\begin{proof}

To analyze the system, we start by expanding the initial conditions \(a(\theta)\) and \(b(\theta)\) in terms of a Fourier series:

\[
a(\theta) = \sum_{m=1}^{\infty} \alpha_m \sin(m \theta), \quad b(\theta) = \sum_{m=1}^{\infty} \beta_m \sin(m \theta), \quad \theta \in (0, \pi).
\]

Where, \(\alpha_m\) and \(\beta_m\) are the Fourier coefficients that capture the spatial dependence of the initial data.

 \bigskip

For the corresponding linear system, we can express \(\eta(t, \theta)\) as a series involving trigonometric functions of time \(t\) and spatial functions \(\sin(m \theta)\):

\[
\eta(t, \theta) = \sum_{m=1}^{\infty} \left(  \alpha_m \cos(m t) + \frac{\beta_m}{m} \sin(m t) \right) \sin(m \theta).
\]

The time derivative of \(\eta(t, \theta)\) is given by:
\[
\frac{\partial \eta(t, \theta)}{\partial t} = \sum_{m=1}^{\infty} \left(  -m \alpha_m \sin(m t) + \beta_m \cos(m t) \right) \sin(m \theta).
\]

\bigskip

Define the Hilbert space \(H\) of initial conditions as the set of pairs \(\begin{pmatrix} a \\ b \end{pmatrix}\) of functions with expansions \(a(\theta)\) and \(b(\theta)\) such that

\[
\sum_{m=1}^{\infty} \left(  m^2 |\alpha_m|^2 + |\beta_m|^2 \right) < \infty.
\]

This space \(H\) is equipped with the dot product
\[
\left\langle \begin{pmatrix} a \\ b \end{pmatrix}, \begin{pmatrix} \tilde{a} \\ \tilde{b} \end{pmatrix} \right\rangle = \sum_{m=1}^{\infty} \left( S( m^2 \alpha_m \tilde{\alpha}_m + \beta_m \tilde{\beta}_m \right).
\]

For the linearized system, the semigroup of solutions \(S(t)\) can be defined as:
\[
S(t) \begin{pmatrix} a \\ b \end{pmatrix} = \sum_{m=1}^{\infty} \begin{pmatrix} \cos(m t) & \frac{1}{m} \sin(m t) \\ -m \sin(m t) & \cos(m t) \end{pmatrix} \begin{pmatrix} \alpha_m \\ \beta_m \end{pmatrix} \sin(m \theta), \quad t \geq 0.
\]

This semigroup represents the evolution of initial states under the linear part of the system and is significative $\forall$ \(t \in \mathbb{R}\) and $S^{*}(t)=S^{-1}(t)=S(-t),\quad t\in R$.

\bigskip

Using Duhamel's principle, the mild solution of the nonlinear system can be written as:

\[
\begin{pmatrix} \eta_1(t) \\ \eta_2(t) \end{pmatrix} = S(t) \begin{pmatrix} a \\ b \end{pmatrix} + \int_0^t S(t - s) \begin{pmatrix} 0 \\ h \end{pmatrix} u(s) \, ds + \int_0^t S(t - s) \begin{pmatrix} 0 \\ \kappa(s, \eta(s, \theta)) \end{pmatrix} ds.
\]

In this context, we define the control space as \( U = \mathbb{R} \), with the operator \( \Omega : \mathbb{R} \to H \) specified by \( \Omega u = \begin{pmatrix} 0 \\ h \end{pmatrix} u \) for \( u \in \mathbb{R} \). Moreover, the semigroup satisfies the property \( S^*(t) = S(-t) \) for all \( t \geq 0 \).

Given the expression

\[
\Omega^*S^*(b - t) \begin{pmatrix} a \\ b \end{pmatrix} = \sum_{m=1}^{\infty} \gamma_m \left( m \alpha_m \sin(m(b - t)) + \beta_m \cos(m(b - t)) \right), \quad t_p \leq t \leq b,
\]

where, we define the right-hand series as \( \varphi(t) \) for \( 0 \leq t \leq b - t_p \), representing a continuous and periodic function with period \( 2\pi \). Furthermore, the coefficients satisfy

\[
m \gamma_m \alpha_m = \frac{1}{\pi} \int_0^{2\pi} \varphi(t) \cos(mt) \, dt, \quad \gamma_m \beta_m = \frac{1}{\pi} \int_0^{2\pi} \varphi(t) \sin(mt) \, dt, \quad m = 1, 2, \dots
\]

Assuming \( b \geq t_p + 2\pi \) and \( \varphi(t) = 0 \) for \( 0 \leq t \leq b - t_p \), we obtain \( m \gamma_m \alpha_m = 0 \) and \( \gamma_m \beta_m = 0 \) for \( m = 1, 2, \dots \). Since \( \gamma_m \neq 0 \), it follows that \( \alpha_m = \beta_m = 0 \) for all \( m \), leading to the conclusion \( a = b = 0 \).
\bigskip

Thus, by Theorem \ref{u9}, we conclude that the wave equation \eqref{u10} is $\mathcal{A}$-controllable.
\end{proof}
\bigskip

\begin{example}
 Analyze a control system that is subject to impulsive effects and is regulated by the heat equation:
 \begin{align}\label{k1}
     \begin{cases}
         \frac{\partial \eta(t,z)}{\partial t}= \frac{\partial^{2} \eta(t,z)}{\partial z^{2}}+\Omega u(t,z)+\kappa(t,\eta(t,z)),\quad 0<z<\pi,\\
         \eta(t,0)=\eta(t,\pi)=0,\quad t\in [0,b]\setminus \{t_{1},\dots, t_{p}\},\\
         \eta(0,z)=\eta_{0}(z),\quad z\in [0,\pi],\\
         \Delta \eta(t_{k},z)=-\eta(t_{k},z)-v_{k}(z), \quad z\in (0,\pi),\quad k=1,\dots,p-1.
     \end{cases}
 \end{align}

 Let \( H = L^2[0, \pi] \), and consider the operator \( A: H \rightarrow H \) defined by \( A\eta = \eta^{\prime\prime} \). The domain of \( A \) is given by

\[
D(A) = \{ w \in H : w \text{ and } w' \text{ are absolutely continuous, } w'' \in H, \text{ and } w(0) = w(\pi) = 0 \}.
\]

Let \( A \) be an operator on \( H = L^2[0, \pi] \) defined by

\[
Aw = -\sum_{n=1}^{\infty} n^{2} \langle w, e_n \rangle e_n, \quad w \in D(A),
\]
where \( \lambda_n = n^2 \), and \( e_n(z) = \sqrt{\frac{2}{\pi}} \sin(nz) \) for \( 0 \leq z \leq \pi \), \( n = 1, 2, \dots \).
\bigskip

Given an initial state \( v_0 \in L^2[0, \pi] \), let the impulsive term  defined as \(\Delta \eta(t_{k},z)=  -\eta(t_k, z)-v_{k}(z) \), such that $B_{k}=D_{k}=-I$ and let \( \kappa \) be a function that is Lipschitz continuous and satisfies linear growth conditions. It is a recognized fact that \( A \) generates a compact semigroup \( S(t) \) in \( H \), represented as

\[
S(t)w = \sum_{n=1}^{\infty} e^{-n^{2}} \langle w, e_n \rangle e_n,\quad w\in H.
\]

Define an infinite-dimensional space \( U \) by

\[
U = \left\{ u  : u = \sum_{n=2}^{\infty} u_n e_n, \quad \sum_{n=2}^{\infty} u_n^2 < \infty \right\},
\]

with the norm \( \|u\|_U = \left(  \sum_{n=2}^{\infty} u_n^2 \right)^{1/2} \). Then define a mapping \( \Omega: U \to H \) by

\[
\Omega u = 2u_2 e_1 + \sum_{n=2}^{\infty} u_n e_n.
\]

Due to the compactness of the semigroup \( S(t) \) generated by \( A \), the associated linear system lacks exact controllability but achieves $\mathcal{A}$-controllability, as noted in \cite{1}. This implies that we can express system \eqref{k1} in the abstract form of equation \eqref{eq1}. According to Theorem \ref{u9}, this system is therefore $\mathcal{A}$-controllable over the interval \([0, b]\).
\end{example}
\bigskip

\begin{example}
    We examine the controlled neutral differential equation under the impulsive effects that follows:

\begin{align}\label{k2}
     \begin{cases}
         \frac{\partial }{\partial t}\big[\eta(t,z)-\lambda_{1}(t,\eta(t-\tau,z))\big]= \frac{\partial^{2} }{\partial z^{2}} \big[\eta(t,z)-\lambda_{1}(t,\eta(t-\tau,z))\big] \\
         \quad  \quad \quad \quad \quad \quad \quad \quad \quad \quad \quad \quad \quad+\nu(t,z)+\lambda_{2}(t,\eta(t-\tau,z)),& 0<z<1,\\
         \eta(t,0)=\eta(t,1)=0,& t>1,\\
         \eta(t,z)=\varphi(t,z),& t\in [-\tau,0],\\
         \Delta \eta(t_{k},z)=-\eta(t_{k},z)-v_{k}(z), &  k=1,\dots,p-1.
     \end{cases}
 \end{align}
Let \( \sigma(t, w(t))(z) = \lambda_1(t, w(t -z)) \) and \( \kappa(t, w(t))(z) = \lambda_2(t, w(t -z)) \). Define the operator \((\Omega u)(t)(z) = \nu(t, z) \, \), where \( z \in (0, 1) \).
\bigskip

Consider \( H = L^2[0, 1] \) and set the operator \( A: H \to H \) by the differential equation:
\[
\frac{d^2 w}{dz^2} = Aw
\]
with the domain
\[
D(A) = \Big\{ w \in H \mid w \text{ is absolutely continuous, } \frac{d^2w}{dz^2} \in H, \, \frac{d w}{dt}(0) = \frac{d w}{dt}(1) = 0 \Big\}.
\]
The operator \( A \) has eigenvalues given by \( \eta_n = -n^2\pi^2 \) for \( n \geq 0 \) and corresponding eigenvectors \( e_n(z) = \sqrt{2} \cos(n\pi z) \) for \( n \geq 1 \), with \( e_0 = 1 \), forming an orthonormal basis for \( L^2(0, 1) \). It is well known that \( A \) generates a compact semigroup \( S(t) \) in \( H \), defined by:
\[
S(t)w = \int_0^1 w(v) \, dv + \sum_{n=1}^\infty e^{-n^2 \pi^2 t} \cos(\pi n z) \int_0^1 \cos(\pi n z) w(v) \, dv, \quad w \in H.
\]

The functions \( \lambda_1, \lambda_2: [0, 1]\times [0,1] \to [0, 1] \) are continuous, and there are constants \( k_1 \) and \( k_2 \) such that:
\[
\|\lambda_1(t, w(t -z))\| \leq k_1 \quad \text{and} \quad \|\lambda_2(t, w(t -z))\| \leq k_2.
\]

Therefore, Equation \eqref{k2} can be rewritten in the form of \eqref{eq188} using the previously defined operator \( A \), functions \( \sigma \), and function \( \kappa \). The linear system associated with Equation \eqref{k2} exhibits (($\mathcal{A}$-controllability. According to Theorem \ref{k3}, we accomplish that the system represented by  \eqref{k2} is indeed $\mathcal{A}$-controllable.
\end{example}
\bigskip
In the following example, we present  specific case that illustrates the solution of equation \eqref{eq1} within finite-dimensional Hilbert spaces. This approach clarifies the influence of impulses on the solution of the equation. By comparing the non-impulsive case with the impulsive cases, it becomes evident how each impulse and control input affects the system’s dynamics over time.
\bigskip

\begin{example}
    It is obvious that, in the finite-dimensional Hilbet space the solution of equation \eqref{eq1} is given by:

	\begin{align}\label{eq22}
	\xi(t)=
		\begin{cases}
			e^{At}\xi(0)+\int_{0}^{t} e^{A(t-s)}\big[\Omega u(s)+\kappa(s,\xi(s))\big]ds,\quad  0\leq t\leq t_{1},\\
			\\
		e^{A(t-t_{k})}\xi(t^{+}_{k})+\int_{t_{k}}^{t} e^{A(t-s)}\big[\Omega u(s)+\kappa(s,\xi(s))\big]ds,\quad t_{k}<t\leq t_{k+1},\quad k=1,2,\dots, p,\\
		\end{cases}
	\end{align}
where
\begin{align}\label{eq33}
	\xi(t^{+}_{k})=&\prod_{j=k}^{1}(\mathcal{I}+B_{j})e^{A(t_{j}-t_{j-1})}\xi_{0}\nonumber\\
	+&\sum_{i=1}^{k}\prod_{j=k}^{i+1}(\mathcal{I}+B_{j})e^{A(t_{j}-t_{j-1})}
	(\mathcal{I}+B_{i})\int_{t_{i-1}}^{t_{i}}e^{A(t_{i}-s)}\Omega u(s)ds\nonumber\\
 +&\sum_{i=1}^{k}\prod_{j=k}^{i+1}(\mathcal{I}+B_{j})e^{A(t_{j}-t_{j-1})}
	(\mathcal{I}+B_{i})\int_{t_{i-1}}^{t_{i}}e^{A(t_{i}-s)}\kappa(s,\xi(s))ds\\
	+&\sum_{i=2}^{k}\prod_{j=k}^{i}(\mathcal{I}+B_{j}) e^{A(t_{j}-t_{j-1})} D_{i-1}v_{i-1}+D_{k}v_{k}.\nonumber
\end{align}

Let \( H = \mathbb{R}^2 \). The initial condition is given by \( \xi(0) = \begin{pmatrix} 1 \\ 0 \end{pmatrix} \).
Define the operator \( A \) generating the semigroup \( S(t) \) as:

\[
A = \begin{pmatrix}
0 & 1 \\
-1 & 0
\end{pmatrix}
\]

This operator represents a rotation and defines the semigroup:

\[
S(t) = e^{At} = \begin{pmatrix}
\cos(t) & \sin(t) \\
-\sin(t) & \cos(t)
\end{pmatrix}
\]

Let the control function \( u(t) \) and nonlinear function \( \kappa(t, \xi(t)) \)   be defined as:

\[
u(t) = \begin{pmatrix}
1 \\
0
\end{pmatrix},
\quad
\kappa(t, \xi(t)) = \begin{pmatrix}
0 \\
0.1 \xi_1^2(t)
\end{pmatrix}\quad \text{for } t \in [0, 2],
\]

where \( \xi = \begin{pmatrix} \xi_1 \\ \xi_2 \end{pmatrix} \).
\bigskip

Assume there is a single impulsive point at \( t_1 = 1 \). And we  define the impulsive operator \( B_1 \), \( D_1 \) and the function $v_{1}$ as:

\[
B_1 = \begin{pmatrix}
0 & 0 \\
0 & -0.5
\end{pmatrix}, \quad D_1 = \begin{pmatrix}
1 \\
0
\end{pmatrix},\quad v_{1}=1.
\]

1. For \( 0 \leq t < t_1 \):
   We compute \( \xi(t) \) using equation \(\eqref{eq22}\):

   \[
   \xi(t) = S(t)\xi(0) + \int_{0}^{t} S(t-s) \left[ \Omega u(s) + \kappa(s,\xi(s)) \right] ds
   \]

   The \( B \) operator can be taken as:

   \[
   B = \begin{pmatrix}
   1 & 0 \\
   0 & 0
   \end{pmatrix}
   \]

   Thus, the solution in this interval becomes:

   \begin{align*}
   \xi(t) &= \begin{pmatrix}
   \cos(t) \\
   -\sin(t)
   \end{pmatrix} + \int_{0}^{t} \begin{pmatrix}
   \cos(t-s) & \sin(t-s) \\
   -\sin(t-s) & \cos(t-s)
   \end{pmatrix}  \begin{pmatrix} 1 \\ 0.1 \xi^{2}_{1}(s) \end{pmatrix}  ds
    \end{align*}

2. For \( t = 1 \):
   We compute \( \xi(t^+_1) \) using equation \(\eqref{eq33}\):

   \begin{align*}
   \xi(t^+_1) &= (\mathcal{I} + B_1) S(1)\xi(0) + (\mathcal{I} + B_1) \int_{0}^{1} S(1-s) [\Omega u(s)+ \kappa(s,\xi(s))] ds+D_{1}v_{1}\\
   &=\begin{pmatrix}
1 & 0 \\
0 & 0.5
\end{pmatrix}\begin{pmatrix}
   \cos(1) \\
   -\sin(1)
   \end{pmatrix}+\begin{pmatrix}
1 & 0 \\
0 & 0.5
\end{pmatrix}\int_{0}^{1} \begin{pmatrix}
   \cos(1-s) & \sin(1-s) \\
   -\sin(1-s) & \cos(1-s)
   \end{pmatrix}  \begin{pmatrix} 1 \\ 0.1 \xi^{2}_{1}(s) \end{pmatrix}  ds+\begin{pmatrix}
1 \\
0
\end{pmatrix}\\
&\approx \begin{pmatrix}
1,5403 \\
-0.42075
\end{pmatrix}+\begin{pmatrix}
1 & 0 \\
0 & 0.5
\end{pmatrix}\int_{0}^{1} \begin{pmatrix}
   \cos(1-s) & \sin(1-s) \\
   -\sin(1-s) & \cos(1-s)
   \end{pmatrix}  \begin{pmatrix} 1 \\ 0.1 \xi^{2}_{1}(s) \end{pmatrix}  ds.
    \end{align*}

3. For \( 1 < t \leq 2 \):
   Now we compute \( \xi(t) \):

   \begin{align*}
   \xi(t) &= S(t - 1)\xi(t^+_1) + \int_{1}^{t} S(t - s) \left[ \Omega u(s) + \kappa(s,\xi(s)) \right] ds\\
   &\approx \begin{pmatrix}
   \cos(t-1) & \sin(t-1) \\
   -\sin(t-1) & \cos(t-1)
   \end{pmatrix} \begin{pmatrix}
1,5403 \\
-0.42075
\end{pmatrix}\\
&+\begin{pmatrix}
   \cos(t-1) & \sin(t-1) \\
   -\sin(t-1) & \cos(t-1)
   \end{pmatrix}\begin{pmatrix}
1 & 0 \\
0 & 0.5
\end{pmatrix}\int_{0}^{1} \begin{pmatrix}
   \cos(1-s) & \sin(1-s) \\
   -\sin(1-s) & \cos(1-s)
   \end{pmatrix}  \begin{pmatrix} 1 \\ 0.1 \xi^{2}_{1}(s) \end{pmatrix}  ds\\
   &+ \int_{1}^{t} \begin{pmatrix}
   \cos(t-s) & \sin(t-s) \\
   -\sin(t-s) & \cos(t-s)
   \end{pmatrix}  \begin{pmatrix} 1 \\ 0.1 \xi^{2}_{1}(s) \end{pmatrix} ds.
    \end{align*}

   The final form of \( \xi(t) \) will depend on the computations made in the integral from \( 1 \) to \( t \).

This example demonstrates a finite-dimensional impulsive system with a single impulse at \( t_1 = 1 \). The solution illustrates how the system evolves continuously until the impulse occurs and then adjusts the state variable accordingly.
And their graph describe in Figure 1 and 2 with impulsive and non impulsive cases.

\begin{figure}[h!t]
    \centering
    \includegraphics[width=16cm]{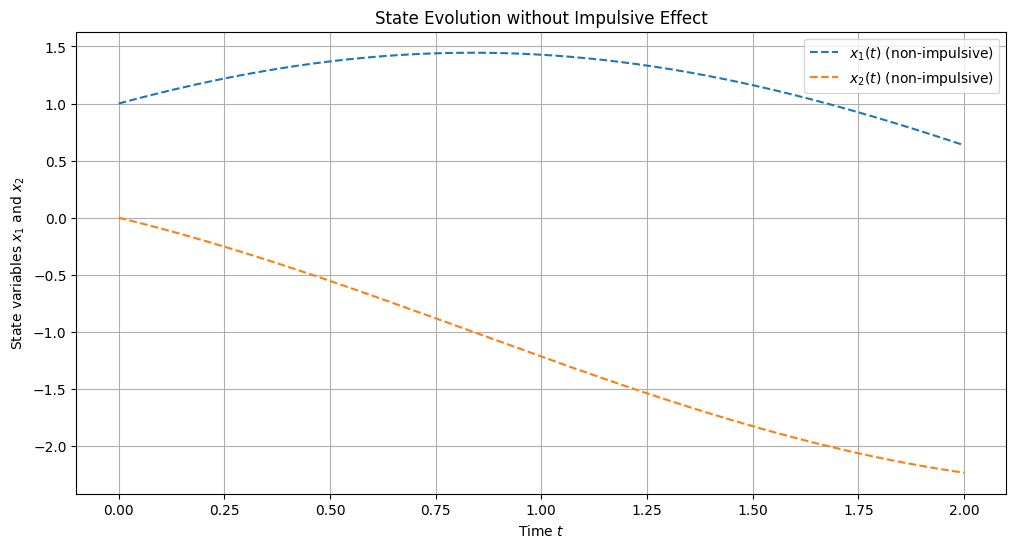}  
    \caption{}
\end{figure}

\begin{figure}[h!t]
    \centering
    \includegraphics[width=16cm]{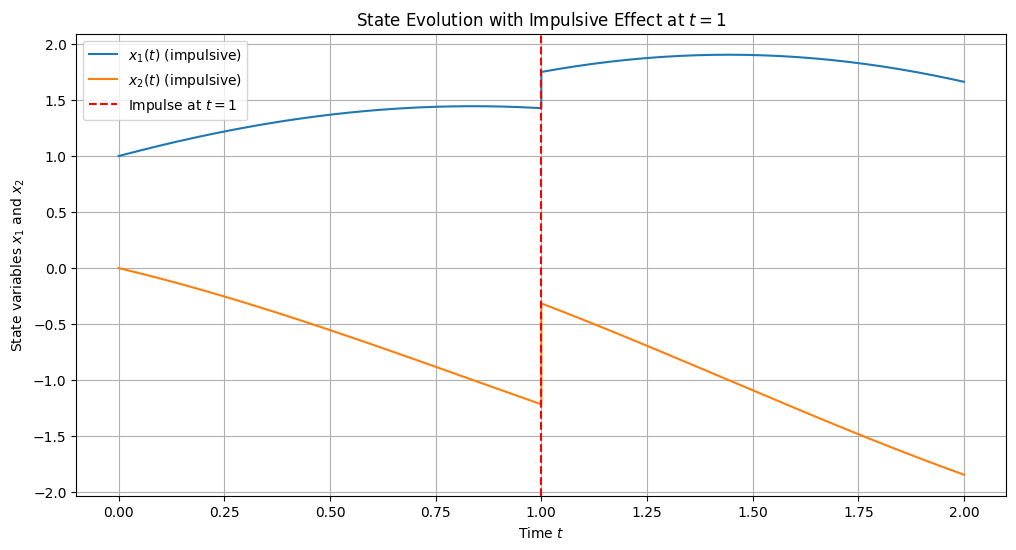}  
    \caption{}
\end{figure}

 4. For \( u(t) = 0 \):
\bigskip

If \( u(t) \) is set to zero, the control function changes to:

\[
u(t) = \begin{pmatrix}
0 \\
0
\end{pmatrix}
\]

In this scenario, the equations for the impulsive system need to be adjusted. Specifically, the equation for \( \xi(t) \) for \( 0 \leq t < t_1 \) becomes:

\begin{align*}
    \xi(t)& = S(t)\xi(0) + \int_{0}^{t} S(t-s) \kappa(s,\xi(s)) ds\\
&= \begin{pmatrix}
   \cos(t) \\
   -\sin(t)
   \end{pmatrix} + \int_{0}^{t} \begin{pmatrix}
   \cos(t-s) & \sin(t-s) \\
   -\sin(t-s) & \cos(t-s)
   \end{pmatrix}  \begin{pmatrix} 0 \\ 0.1 \xi^{2}_{1}(s) \end{pmatrix}  ds
\end{align*}

This implies that the state evolution is governed solely by the semigroup dynamics and the nonlinear function \( \kappa(s, \xi(s)) \), without any external control input.

For the impulse effect at \( t = 1 \), we compute \( \xi(t^+_1) \) as follows:

\begin{align*}
\xi(t^+_1) &= (\mathcal{I} + B_1) S(1)\xi(0) + \int_{0}^{1} S(1-s) \kappa(s,\xi(s)) ds+D_{1}v_{1}\\
 &=\begin{pmatrix}
1 & 0 \\
0 & 0.5
\end{pmatrix}\begin{pmatrix}
   \cos(1) \\
   -\sin(1)
   \end{pmatrix}+\begin{pmatrix}
1 & 0 \\
0 & 0.5
\end{pmatrix}\int_{0}^{1} \begin{pmatrix}
   \cos(1-s) & \sin(1-s) \\
   -\sin(1-s) & \cos(1-s)
   \end{pmatrix}  \begin{pmatrix} 0 \\ 0.1 \xi^{2}_{1}(s) \end{pmatrix}  ds+\begin{pmatrix}
1 \\
0
\end{pmatrix}\\
&\approx \begin{pmatrix}
1,5403 \\
-0.42075
\end{pmatrix}+\begin{pmatrix}
1 & 0 \\
0 & 0.5
\end{pmatrix}\int_{0}^{1} \begin{pmatrix}
   \cos(1-s) & \sin(1-s) \\
   -\sin(1-s) & \cos(1-s)
   \end{pmatrix}  \begin{pmatrix} 0 \\ 0.1 \xi^{2}_{1}(s) \end{pmatrix}  ds.
\end{align*}

For the interval \( 1 < t \leq 2 \), the state evolves according to:

\begin{align*}
\xi(t) &= S(t - 1)\xi(t^+_1) + \int_{1}^{t} S(t - s) \kappa(s,\xi(s)) ds\\
  &\approx \begin{pmatrix}
   \cos(t-1) & \sin(t-1) \\
   -\sin(t-1) & \cos(t-1)
   \end{pmatrix} \begin{pmatrix}
1,5403 \\
-0.42075
\end{pmatrix}\\
&+\begin{pmatrix}
   \cos(t-1) & \sin(t-1) \\
   -\sin(t-1) & \cos(t-1)
   \end{pmatrix}\begin{pmatrix}
1 & 0 \\
0 & 0.5
\end{pmatrix}\int_{0}^{1} \begin{pmatrix}
   \cos(1-s) & \sin(1-s) \\
   -\sin(1-s) & \cos(1-s)
   \end{pmatrix}  \begin{pmatrix} 0 \\ 0.1 \xi^{2}_{1}(s) \end{pmatrix}  ds\\
   &+ \int_{1}^{t} \begin{pmatrix}
   \cos(t-s) & \sin(t-s) \\
   -\sin(t-s) & \cos(t-s)
   \end{pmatrix}  \begin{pmatrix} 0 \\ 0.1 \xi^{2}_{1}(s) \end{pmatrix} ds.
\end{align*}

This example demonstrates a finite-dimensional impulsive system with a single impulse at \( t_1 = 1 \). The solution illustrates how the system evolves continuously until the impulse occurs and then adjusts the state variable accordingly.
And their graph describe in Figure 3 and 4 ($u=0$)   with impulsive and non impulsive cases.

\begin{figure}[h!t]
    \centering
    \includegraphics[width=16cm]{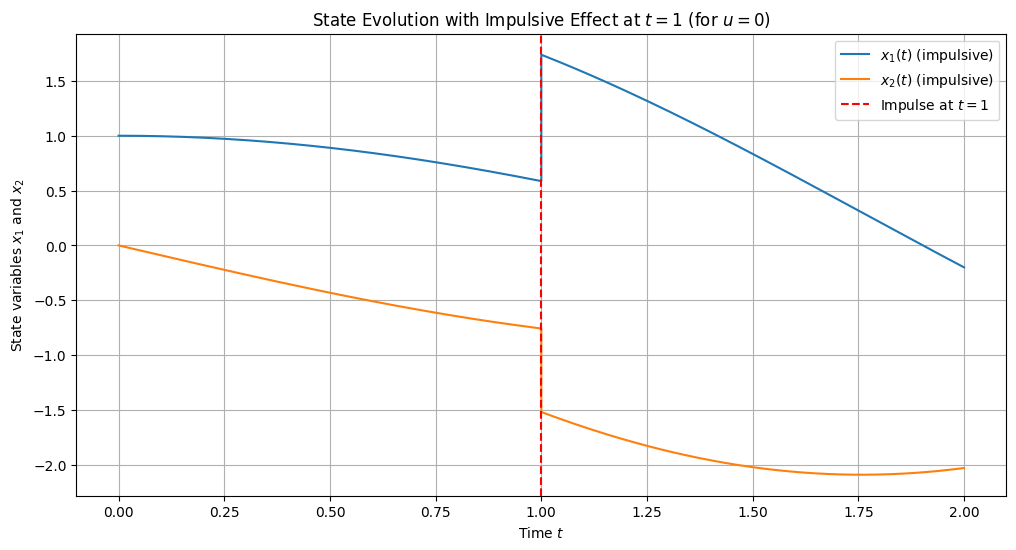}  
    \caption{}
\end{figure}
\begin{figure}[h!t]
    \centering
    \includegraphics[width=16cm]{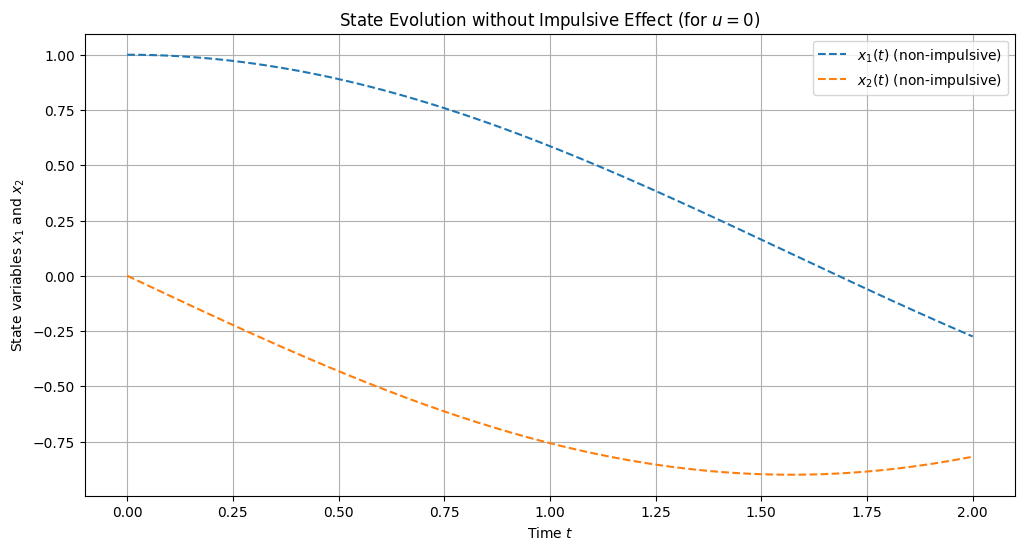}  
    \caption{}
\end{figure}
\end{example}

\section{Conclusion}

The $\mathcal{A}$-controllability of some neutral and semi-linear differential equations with control under impulsive effects was examined in Hilbert spaces in this study. These impulsive semi-linear and neutral differential equations were found to have sufficient requirements for $\mathcal{A}$-controllability using semigroup theory and a fixed-point method. Three examples were given to illustrate how the findings can be used in practice, showing improvements over some recent findings.

$\mathcal{A}$-controllability of impulsive systems refers to the ability to steer the system's state close to any desired target state, even if it cannot reach the target exactly, in systems that experience sudden changes (impulses) at specific times. These impulses represent abrupt events—such as shocks or jumps—that cause an immediate alteration in the system’s state.

For impulsive systems, $\mathcal{A}$-controllability requires analyzing both the continuous dynamics of the system and the effects of impulses. To establish conditions for $\mathcal{A}$-controllability, techniques like fixed-point theorems, semigroup theory, and resolvent operators are often employed. These methods help characterize whether the system’s state can be controlled within a desired proximity to the target state despite the discontinuous behavior caused by impulses.


Challenges in solid mechanics, frequently involve non-monotonic and multi-valued constitutive laws, leading to fractional inclusions. The findings discussed here can be addressed to investigate the approximate and finite approximate controllability of neutral IDES and inclusions by appropriately defining a multi-valued map.

For future research directions, we plan to integrate the above analysis with topics such as fractional differential inclusions, fractional discrete calculus, and variable-order derivatives.

Future research on the  controllability of IDEs systems could focus on the following directions:

\textbf{Variable-Order Systems:} Investigating impulsive systems with variable-order derivatives to capture dynamic processes with varying memory effects.

\textbf{Fractional Dynamics:} Extending controllability results to fractional impulsive systems, including those with distributed delays or complex boundary conditions.

\textbf{Hybrid and Stochastic Systems: }Exploring hybrid impulsive systems or systems under stochastic influences to address real-world uncertainties.

\textbf{Nonlinear and Non-Monotone Dynamics:} Studying nonlinear impulsive systems with multi-valued or non-monotone operators, including applications in solid mechanics and biological systems.

\textbf{Optimization Techniques:} Developing numerical and analytical methods to improve controllability in impulsive systems with constraints or limited resources.

\textbf{Applications in Control Engineering:} Applying theoretical results to practical scenarios in robotics, network control, and bio-inspired systems.

\textbf{Measure of Non-Compactness: }Exploring systems where the measure of non-compactness plays a role in characterizing approximate controllability.

Such investigations would enhance the understanding and application of impulsive systems in various fields.

\end{document}